  \CheckCommand*\refstepcounter[1]{\stepcounter{#1}%
      \protected@edef\@currentlabel
       {\csname p@#1\endcsname\csname the#1\endcsname}%
  }
  \renewcommand*\refstepcounter[1]{\stepcounter{#1}%
    \protected@edef\@currentlabel
      {\csname p@#1\expandafter\endcsname\csname the#1\endcsname}%
  }
  \def\labelformat#1{\expandafter\def\csname p@#1\endcsname##1}
  \DeclareRobustCommand\Ref[1]{\protected@edef\@tempa{\ref{#1}}%
     \expandafter\MakeUppercase\@tempa
  }
  \newcommand{\numberlike}[2]{%
     \expandafter\def\csname c@#1\endcsname{%
         \expandafter\csname c@#2\endcsname}%
  }
  \def\DefaultNumberTheoremWithin{section}
  \theoremstyle{plain}
  \newtheorem{Lemma}{Lemma}
     \numberwithin{Lemma}{\DefaultNumberTheoremWithin}
     \numberwithin{Claim}{\DefaultNumberTheoremWithin}
  \newtheorem{Theorem}{Theorem}
     \numberwithin{Theorem}{\DefaultNumberTheoremWithin}
  \newtheorem{Corollary}{Corollary}
     \numberwithin{Corollary}{\DefaultNumberTheoremWithin}
     \numberwithin{Proposition}{\DefaultNumberTheoremWithin}
     \numberwithin{Conjecture}{\DefaultNumberTheoremWithin}
  \theoremstyle{definition}
  \newtheorem{Definition}{Definition}
     \numberwithin{Definition}{\DefaultNumberTheoremWithin}
  \theoremstyle{definition}
     \numberwithin{Question}{\DefaultNumberTheoremWithin}
  \theoremstyle{definition}
     \numberwithin{Problem}{\DefaultNumberTheoremWithin}
  \theoremstyle{remark}
  \newtheorem{Remark}{Remark}
     \numberwithin{Remark}{\DefaultNumberTheoremWithin}
  \theoremstyle{remark}
     \numberwithin{Example}{\DefaultNumberTheoremWithin}
     \numberwithin{Case}{Lemma}
     \numberwithin{Step}{Lemma}
  \def\eqref{\ref}
  \def\Ker{\mathrm{Ker}}
  \def\Im{\mathrm{Im}}
  \def\Hom{\mathcal{H}}
  \def\CC{{\mathcal{C}}}
  \def\LL{{\mathcal{L}}}
   \def\DD{{\mathcal{D}}}
   \def\push{{\mathrm{push}}}
   \def\Con{{\normalfont {\textrm{Con}}}}
\begin{document}

\title[Homology of Cubical Sets with Connections]        
{Homology Groups of Cubical Sets with Connections}

\author[H. Barcelo]{H\'{e}l\`{e}ne Barcelo}

\address[H\'{e}l\`{e}ne Barcelo]
{
The Mathematical Sciences Research Institute, 17 Gauss Way, Berkeley, CA 94720, USA
}
\email{hbarcelo@msri.org}

\author[C. Greene]{Curtis Greene}

\address[Curtis Greene]
{
Haverford College, Haverford, PA 19041, USA
}
\email{cgreene@haverford.edu}

\author[A.S. Jarrah]{Abdul Salam Jarrah}

\address[Abdul Jarrah]
{
Department of Mathematics and Statisticss, American University of Sharjah, PO Box 26666, Sharjah, United Arab Emirates
}
\email{ajarrah@aus.edu}

\author[V.Welker]{Volkmar Welker}

\address[Volkmar Welker]
{
Fachbereich Mathematik und Informatik, Philipps-Universit\"at, 35032 Marburg, Germany
}
\email{welker@mathematik.uni-marburg.de}

\thanks{This material is based upon work supported by the National Science Foundation under Grant No. DMS-1440140 while the authors were in residence at the
Mathematical Sciences Research Institute in Berkeley, California, USA}


\begin{abstract}
Toward defining commutative cubes in all dimensions, Brown and Spencer introduced the notion of ``connection'' as a new kind of degeneracy. In this paper, for a cubical set with connections, we show that the connections generate an acyclic subcomplex of the chain complex of the cubical set. In particular, our results show that the homology groups of a cubical set with connections are independent of whether we normalize by the connections or we do not, that is, connections do not contribute to any nontrivial cycle in the homology groups of the cubical set.
\end{abstract}


\maketitle


\section{Introduction}

Cubical sets stemmed naturally from the development of homology theory of various spaces.
Instead of simplices, cubes were, for the first time, used by Serre to develop (co)homology theory for fiber spaces  \cite{Serre:51},  and Eilenberg and MacLane \cite{EM} developed the singular, cubical homology theory of topological spaces.
Massey's classical book \cite{Massey80} presents a comprehensive treatment of singular homology using the cubical approach. 

Kan introduced and studied abstract cubical sets for the purpose of developing a general homotopy theory, see \cite{Kan1}. Cubical sets come with a singular homology theory \cite[Section 14.7]{Brown} and a geometric realization \cite[Definition 11.1.11]{Brown}.
Federer \cite[Theorem 3.9.12]{Federer} showed that the singular homology groups of a cubical set and that of its geometric realization are isomorphic. 

Toward the development of a general abstract homotopy theory, Brown and Spencer \cite{BroSpen} identified the need, in higher dimensions, for what they call ``commutative'' cubes, and introduced a new kind of degeneracy which they call ``connections.''   Cubical sets with connections were then introduced and studied by Brown and Higgins in \cite{BrownHiggins2}. The recent paper \cite{Brown18} explains the origin of the notion of connection as well as the need for it.

Not all cubical sets admit connections. However, cubical sets with connections have been shown to have many desirable properties \cite{BrownHiggins}, and have characteristics similar to that of simplicial sets \cite{GranMau}. For examples, cubical abelian groups with connections are equivalent to chain complexes \cite{brown2003}, and cubical groups with connections are Kan fibrant  \cite{Tonks:92}, a property shared with simplicial sets. Recently, in  \cite{Maltsinioti:09}, it was shown that cubical sets with connections form a strict test category. In particular, the geometric realization of the product of cubical sets with connections has the ``right'' homotopy type; a property that cubical set (without connections) do not have in general.

In this note we study the singular homology groups of cubical sets with connections. We were originally motivated by computational considerations encountered in \cite{BGJW}. Since the chain groups are very large, we explored cutting down the size of the
the chain complex by dropping connection cubes. For this purpose, we investigate the contribution of connections to the nontrivial cycles in the homology groups. We do so by studying the relations between the singular cubical differential, the face maps, the degeneracy maps and the connections maps. 
This study culminates in \ref{thm:bd} from which we then deduce
in \ref{cor:subcomplex} that connections generate a chain subcomplex of the singular chain complex of the cubical set. Furthermore, using a chain homotopy given in 
\ref{thm:homotopy} we deduce in \ref{cor:final} 
that the homology groups of this subcomplex are trivial. 
In particular, the quotient
of the singular chain complex of the cubical set by the
subcomplex generated by the connection cubes computes
the same homology as the singular chain complex itself.

In an appendix we provide the arguments showing that 
this quotient complex indeed is the cellular chain
complex of the canonical CW-structure on the geometric
realization of a cubical set with connections (see
\ref{thm:final}). In particular, for a cubical set with connections, we state 
in \ref{cor:superfinal} that the singular
homology groups of the geometric realizations with and without connection identifications coincide. 

The latter result is also a consequence of a result by
Antolini \cite{Antolini:02}, who states that the
two realizations are homotopy equivalent. 
Since we consider Antolini's arguments hard to penetrate, we
see some value of our down to earth derivation.
\section{Background and Notations}

In this section we recall the definition of a cubical set with connections and the homology theory of cubical sets. Then we give two examples of such sets to demonstrate the motivation for this study. 

Throughout the paper,  $R$ denotes a commutative ring with unit which shall be the ring of coefficients.
For any positive integer $n$, let $[n] := \{1,\dots,n\}$.


\begin{Definition}[\cite{Kan1}] \label{def:cubicalset}
A \textit{cubical set}  $K$ is a collection of sets $\{K_n\}_{n\geq 0}$ together with, for each $n \geq 1$ and each $i\in [n]$,
\begin{enumerate} 
    \item two maps $f_i^+, f_i^- : K_n \longrightarrow K_{n-1}$, which are called \emph{face maps}, and
   \item  a  map $\varepsilon_i:K_{n-1} \longrightarrow K_n$, which is called a \emph{degeneracy map},
 \end{enumerate}
   satisfying the following relations: For $\alpha,\beta \in\{+,-\}$,
   \begin{enumerate}[(i)]
    { \setstretch{1.3}
    \item  $f_i^\alpha f_j^\beta =  f_{j-1}^\beta f_i^\alpha  \hspace{0.8cm} \mbox{ if } i<j.$
     \item $\varepsilon_i \varepsilon_j = \varepsilon_{j+1} \varepsilon_i \hspace{1cm} \mbox{ if } i \leq j. $
    \item  $f_i^\alpha \varepsilon_j = \left\{ \begin{array}{ll}
                                        \varepsilon_{j-1} f_i^\alpha & \mbox{if  } i < j ;\\
                                        \varepsilon_{j} f_{i-1}^\alpha & \mbox{if } i > j; \\
                                       id & \mbox{if } i = j.\end{array} \right.$ \label{def:cc3}
    }
    \end{enumerate}
\end{Definition}
In a cubical set $K$, an  element $\sigma \in K_n$ is called a \textit{singular $n$-cube}. 
A singular $n$-cube $\sigma$ is said to be degenerate if  $\sigma = \varepsilon_if_i^+\sigma$ for some $i\in [n]$.
Otherwise, $\sigma$ is called \textit{non-degenerate}.

\begin{Definition}[\cite{AlAgl}] \label{def:cub-conn}
A \textit{cubical set with connections} is a cubical set $K$ together with, for $n\geq 1$
 and each $i \in [n]$, two additional maps (called \emph{connections})
\[
\Gamma_i^+, \Gamma_i^- :K_n \longrightarrow K_{n+1}.
\]
such that, for $\alpha,\beta \in \{+,-\}$ and $i,j\in[n]$, the following relations are satisfied:
\begin{enumerate}[(i)]
 { \setstretch{1.3}
\item 
     $ \Gamma_i^\alpha \Gamma_j^\beta =\Gamma_{j+1}^\beta \Gamma_i^\alpha \hspace{1cm} \mbox{ if } i \leq j$.
\item
     $ \Gamma_i^\alpha \varepsilon_j = \left\{ \begin{array}{ll}
                                        \varepsilon_{j+1} \Gamma_i^\alpha & \mbox{if  } i < j ;\\
                                        \varepsilon_{j} \Gamma_{i-1}^\alpha & \mbox{if } i > j; \\
                                       \varepsilon_i^2= \varepsilon_{i+1}\varepsilon_i & \mbox{if } i = j.
                                       \end{array} \right.	$											
\item \label{def:ccc3}
       $ f_i^\alpha \Gamma_j^\beta = \left\{ \begin{array}{ll}
                                \Gamma_{j-1}^\beta f_i^\alpha & \mbox{if  } i < j;\\
                                \Gamma_{j}^\beta f_{i-1}^\alpha & \mbox{if } i > j+1;\\
                                id	&  \mbox{if } i=j, j+1, \alpha=\beta;	\\
                                 \varepsilon_if_i^\alpha & \mbox{ if } i=j, j+1, \alpha \neq \beta.
            \end{array} \right.$
 }
\end{enumerate}
\end{Definition}

\subsection*{Homology Groups of Cubical Sets}

Let $K$ be a cubical set and let $R$ be the ring of coefficients. For each $n \geq 0$, let
$\LL_n(K)$ be the free $R$-module generated by the singular $n$-cubes with coefficients from $R$, that is,
\[
\LL_n(K) := \{ \sum_{\sigma \in S} r_\sigma \sigma :S \mbox{ finite subset of } K_n \mbox{ and } r_\sigma \in R \}.
\]
For $n >0$, define the map $\partial_n : \LL_n(K) \longrightarrow \LL_{n-1}(K)$  such that, for each singular $n$-cube $\sigma$,
\[
\partial_n(\sigma) = \sum_{i=1}^n (-1)^i (f_i^-\sigma - f_i^+\sigma)
\]
and extend linearly to all elements of  $\LL_n(K)$. Furthermore, define the map $\partial_0:\LL_0(K) \longrightarrow \LL_{-1}(K) (=\{0\})$ to be the zero map, that is
$\partial_0(\sigma)=0$ for all $\sigma \in \LL_0$.

For each $n\geq 1$, let $\DD_n(K)$ be the $R$-submodule of
$\LL_n(K)$ that is generated by all degenerate singular $n$-cubes,
and let $\CC_n(K)$ be the free $R$-module
$\LL_n(K)/\DD_n(K)$, whose elements are called $n$-chains.
Clearly, the cosets of non-degenerate singular $n$-cubes freely generate $\CC_n(K)$.

Using \ref{def:cubicalset}(\ref{def:cc3}), it is easy to check that
$\partial_n [\DD_n(K)] \subseteq \DD_{n-1}(K)$  and, for $n\geq 1$,  $\partial_{n-1} \partial_n = 0$, see \cite{BCW, Massey80}.
Hence,
$\partial_n: \CC_n(K) \longrightarrow \CC_{n-1}(K)$ is a boundary operator,
and  $\CC(K) = (\CC_\bullet(K),\partial_\bullet)$ is a
chain complex of free $R$-modules. We call $\CC(K)$ the \textit{non-degenerate chain complex} of the cubical set $K$.

The homology groups of $K$ are defined to be the homology groups of the chain complex $\CC(K)$, that is,   $\Hom_n(K) := \Ker(\partial_n)/\Im(\partial_{n+1})$, see \cite{Kan1}.
 For more information about the homology and homotopy of cubical sets see \cite[Sections 14.7 and 13.1]{Brown}.

\subsection*{Cubical Sets of Topological Spaces}
Let $X$ be a topological space, and, for $n \geq 0$, let $I^n$ be the geometric $n$-dimensional cube, that is, $I^n := \{(x_1,\dots,x_n) : x_i \in [0,1], i \in[n]\}$ with the standard topology. Define $KX_n$ to be the set of all continuous maps $\sigma: I^n \longrightarrow X$.
For each $i \in [n]$ and $\sigma \in KX_n$, define face maps $f_i^+\sigma ,f_i^-\sigma \in KX_{n-1}$ such that, for $(a_1,\dots,a_{n-1}) \in I^{n-1}$,
\begin{align*}
(f_i^+\sigma)(a_1,\dots,a_{n-1}) & := \sigma(a_1,\dots,a_{i-1},1, a_{i},\dots,a_{n-1}), \\
(f_i^-\sigma)(a_1,\dots,a_{n-1}) &:= \sigma(a_1,\dots,a_{i-1},0, a_{i},\dots,a_{n-1}).
\end{align*}
Also, define $\varepsilon_i\sigma \in  KX_{n+1}$ such that, for $(a_1,\dots,a_{n+1}) \in I^{n+1}$,
\[
(\varepsilon_i\sigma)(a_1,\dots,a_{n+1}) := \sigma(a_1,\dots,a_{i-1},a_{i+1},\dots,a_{n+1}).
\]
It is easy to check that $KX := \{KX_n\}_{n\geq 0}$ along with the face maps $f_i^\pm$ and degeneracy maps $\varepsilon_i$ is a cubical set.

Furthermore, $KX$ is a cubical set with connections defined as follows. For each $i\in [n]$, set 

\[
\Gamma_i^{\varepsilon}\sigma(a_1,\dots, a_{n+1}) := \sigma(a_1,\dots,a_{i-1}, m_\varepsilon(a_i,a_{i+1}),a_{i+2},\dots, a_{n+1})
\]
where
\[m_\varepsilon(x,y)=
\left\{ \begin{array}{ll}
         \min(x,y) & \mbox{if }\varepsilon = +;\\
        \max(x,y) & \mbox{if } \varepsilon = -.\end{array} \right.
\]
 The set $KX$ was initially constructed by Eilenberg and Mac Lane \cite{EM} and was used to define the cubical singular homology groups of $X$, which turned out to be the same as the (classical) singular homology groups of $X$, that is, $H_n(X)= H_n(KX)$ for all $n$, see \cite[Section 2, Chapter II]{Massey80}. Furthermore, the geometric realization $|KX|$ of $KX$ and $X$ are weakly homotopy equivalent \cite[Proposition 11.1.16]{Brown}, in particular $H_n(|KX|)$ and  $H_n(X)$ are isomorphic for all $n$, see \cite[Theorem 7.6.25]{Spanier}.

\subsection*{Discrete Cubical Sets of Graphs}
Another  cubical set with connections arises from the development of a discrete homology theory for metric spaces \cite{BBLL,BCW}. For a given metric space $X$, the singular $(n,r)$-cubes are defined to be the $r$-Lipschitz maps from the $n$-dimensional Hamming cube to the metric space $X$, and the (discrete) homology groups of the metric space $X$ are defined to be the singular homology groups of the resulting singular chain complex.

In a recent paper \cite{BGJW} we study the theory from
\cite{BCW} in the combinatorially interesting case where
the singular $n$-cubes are the graph homomorphisms from the
$n$-dimensional Hamming cube to a given undirected, simple graph $G$. This results in a cubical set $KG$ which is used to define a (discrete) cubical homology of the graph $G$.

For $n \geq 0$, let $Q_n$ be the Hamming $n$-dimensional cube, that is, $Q_n := \{(x_1,\dots,x_n) : x_i \in \{0,1\}, i \in[n]\}$. Define $KG_n$ to be the set of all graph homomorphisms $\sigma: Q_n \longrightarrow G$.
For each $i \in [n]$ and $\sigma \in KG_n$, define face maps $f_i^+\sigma ,f_i^-\sigma \in KG_{n-1}$ such that, for $(a_1,\dots,a_{n-1}) \in Q_{n-1}$,
\begin{align*}
(f_i^+\sigma)(a_1,\dots,a_{n-1}) & := \sigma(a_1,\dots,a_{i-1},1, a_{i},\dots,a_{n-1}), \\
(f_i^-\sigma)(a_1,\dots,a_{n-1}) &:= \sigma(a_1,\dots,a_{i-1},0, a_{i},\dots,a_{n-1}).
\end{align*}
Also, define $\varepsilon_i\sigma \in  KG_{n+1}$ such that, for $(a_1,\dots,a_{n+1}) \in Q_{n+1}$,
\[
(\varepsilon_i\sigma)(a_1,\dots,a_{n+1}) := \sigma(a_1,\dots,a_{i-1},a_{i+1},\dots,a_{n+1}).
\]
Furthermore, for each $i\in [n]$, define connection maps $\Gamma_i^+\sigma, \Gamma_i^-\sigma \in KG_{n+1}$ such that
\[
\Gamma_i^{\varepsilon}\sigma(a_1,\dots, a_{n+1}) := \sigma(a_1,\dots,a_{i-1}, m_\varepsilon(a_i,a_{i+1}),a_{i+2},\dots, a_{n+1}),
\]
where
\[m_\varepsilon(x,y)=
\left\{ \begin{array}{ll}
         \min(x,y) & \mbox{if }\varepsilon = +;\\
        \max(x,y) & \mbox{if } \varepsilon = -.\end{array} \right.
\]
The proof of the following lemma is straightforward and is similar to that of $KX$ being a cubical set with connections.
\begin{Lemma}
The collection $KG := \{KG_n\}_{n\geq 0}$ along with the face maps $f_i^\pm$, degeneracy maps $\varepsilon_i$ and connections $\Gamma_i^\pm$ is a cubical set with connections.
\end{Lemma}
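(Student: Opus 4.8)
The statement to prove is that $KG = \{KG_n\}_{n \geq 0}$ together with the face maps, degeneracy maps, and connections forms a cubical set with connections. This means verifying the axioms in Definition~\ref{def:cubicalset} (the three families of identities among face and degeneracy maps) and Definition~\ref{def:cub-conn} (the three families of identities involving the connection maps $\Gamma_i^\pm$).

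The plan is to check each axiom by direct computation on an arbitrary singular $n$-cube $\sigma \colon Q_n \to G$, evaluating both sides at an arbitrary point $(a_1, \dots, a_k) \in Q_k$ of the appropriate dimension. Since all the structure maps act by reindexing, duplicating, or combining coordinates (with $\min$/$\max$), each identity reduces to a bookkeeping check that the two resulting coordinate tuples passed into $\sigma$ agree. I would organize this as: first the face-face relation (i), which is the standard observation that deleting coordinate $j$ then coordinate $i$ (with $i < j$) equals deleting coordinate $i$ then coordinate $j - 1$; then the degeneracy-degeneracy relation (ii), the dual statement about inserting duplicated coordinates; then the mixed face-degeneracy relation (iii), splitting into the cases $i < j$, $i > j$, $i = j$. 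The $i = j$ case is where a face map undoes a degeneracy: inserting a duplicate coordinate at position $j$ and then setting coordinate $j$ to $0$ or $1$ leaves a tuple in which the other copy of that coordinate (now at position $j$) still carries the original value, so we recover $\sigma$; this uses that $\min$/$\max$ are not involved for $\varepsilon$, just duplication.

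Next I would turn to the connection axioms. Relation (i), $\Gamma_i^\alpha \Gamma_j^\beta = \Gamma_{j+1}^\beta \Gamma_i^\alpha$ for $i \leq j$, follows because the two combining operations act on disjoint pairs of coordinates (after the reindexing forced by the first insertion), so they commute; one checks the index shift $j \mapsto j+1$ is exactly what compensates for the coordinate inserted by $\Gamma_i$. Relation (ii) relating $\Gamma_i^\alpha$ and $\varepsilon_j$ splits into $i < j$, $i > j$, and $i = j$; the first two are again disjoint-support commutations with appropriate reindexing, while the $i = j$ case uses the idempotency-type identity $m_\varepsilon(x, x) = x$: applying $\varepsilon_i$ duplicates coordinate $i$, and then $\Gamma_i^\alpha$ combines that coordinate with its own copy via $\min$ or $\max$, which returns the same value, yielding $\varepsilon_{i+1}\varepsilon_i$. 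Relation (iii), $f_i^\alpha \Gamma_j^\beta$, is the most case-heavy: for $i < j$ and $i > j+1$ the face and connection have essentially disjoint support and one just tracks indices; the interesting cases are $i \in \{j, j+1\}$, where setting one of the two combined coordinates to a constant forces $m_\beta$ to degenerate. When $\alpha = \beta$ — say $\beta = +$ so $m_+ = \min$ — setting $a_i = 1$ (for $i = j$) gives $\min(1, a_{j+1}) = a_{j+1}$, recovering $\sigma$ (the identity); when $\alpha \neq \beta$, setting $a_i$ to the ``wrong'' constant makes $m_\beta$ constantly equal to that value, which duplicates it and produces $\varepsilon_i f_i^\alpha$. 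I would verify each of the four subcases of (iii) for both $i = j$ and $i = j+1$, since the two positions are not symmetric in how the constant propagates.

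The only genuine subtlety — and the reason this is a lemma rather than an off-hand remark — is not the coordinate bookkeeping but checking that every map in sight actually lands in $KG$, i.e.\ that the composites are again graph homomorphisms $Q_k \to G$. Face maps and degeneracies restrict or precompose with coordinate projections $Q_k \to Q_{k-1}$ or $Q_{k+1} \to Q_k$, which are graph homomorphisms (or section-like maps whose relevant restrictions are), so composites with $\sigma$ remain homomorphisms. For the connections one must observe that the map $Q_{n+1} \to Q_n$ sending $(a_1, \dots, a_{n+1})$ to $(a_1, \dots, a_{i-1}, m_\varepsilon(a_i, a_{i+1}), a_{i+2}, \dots, a_{n+1})$ is itself a graph homomorphism of Hamming cubes: adjacent vertices of $Q_{n+1}$ differ in exactly one coordinate, and since $m_\varepsilon$ depends on coordinates $i$ and $i+1$ it either changes $m_\varepsilon$ by at most one (when the differing coordinate is $i$ or $i+1$) or not at all, so the image is either equal or adjacent, hence a homomorphism (with possible equality allowed because graph homomorphisms here may collapse edges). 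Granting this, $\Gamma_i^\varepsilon \sigma = \sigma \circ (\text{that homomorphism})$ is a graph homomorphism, so $\Gamma_i^\varepsilon \sigma \in KG_{n+1}$, and once everything is known to live in the right sets the axioms follow from the pointwise coordinate identities described above exactly as in the case of $KX$. I expect the main (very mild) obstacle to be nothing more than patiently handling all subcases of connection relation (iii) without sign or index errors; there is no conceptual difficulty, which is why the statement is asserted to be straightforward.
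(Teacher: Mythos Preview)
Your proposal is correct and follows exactly the approach the paper indicates: the paper does not actually give a proof but simply asserts that the verification is straightforward and parallel to the $KX$ case, and your outline carries out precisely that direct coordinate-by-coordinate check of the axioms in Definitions~\ref{def:cubicalset} and~\ref{def:cub-conn}. Your additional remark that the maps $Q_{n+1}\to Q_n$ underlying $\Gamma_i^\varepsilon$ (and likewise those underlying $f_i^\pm$ and $\varepsilon_i$) are themselves graph homomorphisms, so that precomposition keeps $\sigma$ inside $KG$, is the one point that genuinely distinguishes the graph setting from the topological one, and you handle it correctly.
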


Even though we were able to compute the homology groups of many classes of graphs \cite[Sections 4 and 7]{BGJW}, in general such computations are not feasible and, once again, the need for better understanding of the cubical set itself is evident. Investigating the role of the connections in the nontrivial cycles in the homology groups of $KG$ seems a natural step.

\section{Homology of the Connection Chain Subcomplex}

Let $K$ be a cubical set with connections and let $\CC(K)$ be its non-degenerate chain complex. It is easy to see that the set of connections of $K$ does not form a cubical subset of $K$ as not all faces of a connection are necessarily  connections. However, we will show in this section that the connections generate a chain subcomplex of $\CC(K)$. Furthermore, the homology groups of this subcomplex are trivial.

\begin{Theorem} \label{thm:bd}
Let $K$ be a cubical set and $\CC(K)$ be its chain complex as above. Let $\tau \in K_n$ be a singular $n$-cube and $\beta \in\{+,-\}$. Then,

\begin{enumerate}[(i)]
{\setstretch{1.3}
\item 
      $\partial_{n+1} \Gamma_1^\beta(\tau) =  -  \Gamma_{1}^\beta   \sum_{i= 2}^n (-1)^i (f_i^- - f_i^+) (\tau).$
\item 
       $\partial_{n+1} \Gamma_n^\beta(\tau) = \Gamma_{n-1}^\beta \sum_{i=1}^{n-1} (-1)^i (f_i^- - f_i^+) (\tau).$
\item For any $1 < t <n$,
    \[\partial_{n+1} \Gamma_t^\beta(\tau) = \Gamma_{t-1}^\beta \sum_{i=1}^{t-1} (-1)^i (f_i^- - f_i^+) (\tau) -  \Gamma_{t}^\beta   \sum_{i= t+1}^n (-1)^i (f_i^- - f_i^+) (\tau).\]
}
\end{enumerate}
\end{Theorem}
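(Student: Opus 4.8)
The three formulas are really one computation, carried out in three overlapping ranges of the index. The plan is to start from the definition
\[
\partial_{n+1}\Gamma_t^\beta(\tau) = \sum_{i=1}^{n+1} (-1)^i\bigl(f_i^-\Gamma_t^\beta - f_i^+\Gamma_t^\beta\bigr)(\tau),
\]
and to rewrite each term $f_i^\alpha\Gamma_t^\beta$ using the commutation relation \ref{def:cub-conn}(\ref{def:ccc3}). This splits the sum into three blocks according to whether $i<t$, $i\in\{t,t+1\}$, or $i>t+1$. In the first block $f_i^\alpha\Gamma_t^\beta = \Gamma_{t-1}^\beta f_i^\alpha$, in the third block $f_i^\alpha\Gamma_t^\beta = \Gamma_t^\beta f_{i-1}^\alpha$, and in the middle block the two terms with $\alpha=\beta$ contribute identity maps (hence cancel against each other once signs are tracked) while the two terms with $\alpha\neq\beta$ contribute $\varepsilon_i f_i^\beta$, which lies in $\DD_n(K)$ and so vanishes in $\CC(K)$. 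I would organize this as: (1) expand $\partial_{n+1}$; (2) substitute the four cases of \ref{def:ccc3}; (3) show the $i=t$ and $i=t+1$ contributions die in $\CC(K)$; (4) reindex the surviving two blocks and pull $\Gamma_{t-1}^\beta$ and $\Gamma_t^\beta$ out of the sums; (5) specialize $t=1$ (the first block is empty) and $t=n$ (the third block is empty) to get (i) and (ii) as corollaries of (iii).

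**Where the care is needed.**

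The routine part is the commutation substitution; the delicate part is the bookkeeping of signs and indices in step (4), and making sure the middle block genuinely disappears in step (3). For the middle block: when $i=t$, the relation gives $f_t^\beta\Gamma_t^\beta(\tau)=\tau$ and $f_t^{-\beta}\Gamma_t^\beta(\tau)=\varepsilon_t f_t^{-\beta}(\tau)$; when $i=t+1$, it gives $f_{t+1}^\beta\Gamma_t^\beta(\tau)=\tau$ and $f_{t+1}^{-\beta}\Gamma_t^\beta(\tau)=\varepsilon_t f_t^{-\beta}(\tau)$. So the total middle-block contribution to $\partial_{n+1}\Gamma_t^\beta(\tau)$ is
\[
(-1)^t\bigl(f_t^- - f_t^+\bigr)\Gamma_t^\beta(\tau) + (-1)^{t+1}\bigl(f_{t+1}^- - f_{t+1}^+\bigr)\Gamma_t^\beta(\tau).
\]
The two honest-identity terms ($\alpha=\beta$) appear with opposite overall signs $(-1)^t$ and $(-1)^{t+1}$ (up to a sign coming from whether $\beta=+$ or $\beta=-$) and cancel exactly; the two $\varepsilon_t f_t^{-\beta}(\tau)$ terms also appear with signs $(-1)^t$ and $(-1)^{t+1}$ and cancel — and even if one is sloppy about that cancellation, each such term individually lies in $\DD_n(K)$, hence is zero in $\CC_n(K)$. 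This last observation is the reason the theorem is stated for $\CC(K)$ rather than $\LL(K)$, and it should be flagged explicitly.

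**Assembling the surviving blocks.**

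For the first block ($i<t$), substituting $f_i^\alpha\Gamma_t^\beta=\Gamma_{t-1}^\beta f_i^\alpha$ and factoring out the (linear) map $\Gamma_{t-1}^\beta$ yields exactly $\Gamma_{t-1}^\beta\sum_{i=1}^{t-1}(-1)^i(f_i^- - f_i^+)(\tau)$, with no reindexing: the index $i$ is untouched. For the third block ($i>t+1$, i.e. $i$ ranging over $t+2,\dots,n+1$), substituting $f_i^\alpha\Gamma_t^\beta=\Gamma_t^\beta f_{i-1}^\alpha$ and setting $j=i-1$ turns the sum into $\sum_{j=t+1}^{n}(-1)^{j+1}(f_j^- - f_j^+)(\tau)$ inside $\Gamma_t^\beta$; the extra sign $(-1)^{j+1}=-(-1)^j$ is the source of the minus sign in front of the second summation in (iii). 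Putting the two blocks together, with the middle block gone, gives statement (iii). Then statement (i) is the special case $t=1$, where "$i<t$" is vacuous so only the second sum remains (and, tracing the convention $\Gamma_0^\beta$ never actually appears, this is consistent), and statement (ii) is the special case $t=n$, where "$i>t+1$" is vacuous so only the first sum remains. I expect the main obstacle to be nothing conceptual but rather presenting the sign tracking in the middle block cleanly enough that the reader is convinced the identity terms cancel in pairs rather than, say, doubling up.
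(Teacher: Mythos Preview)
Your proposal is correct and follows essentially the same route as the paper's proof: expand $\partial_{n+1}\Gamma_t^\beta(\tau)$, invoke \ref{def:cub-conn}(\ref{def:ccc3}) to commute $f_i^\alpha$ past $\Gamma_t^\beta$ in three ranges, observe that the $i=t$ and $i=t+1$ contributions cancel (the paper compresses this into the single remark $f_t^\alpha\Gamma_t^\beta(\tau)=f_{t+1}^\alpha\Gamma_t^\beta(\tau)$, while you unpack the $\alpha=\beta$ and $\alpha\neq\beta$ cases separately), and reindex the $i>t+1$ block. The only cosmetic difference is that the paper treats (i), (ii), (iii) in parallel rather than deducing (i) and (ii) as degenerate instances of (iii), but the underlying computation is identical.
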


\begin{proof}
Let $\tau \in K_n$ be a singular $n$-cube and $\beta \in \{+,-\}$.  Then, for $t\in[n]$,
\begin{align*}
\partial_{n+1} \Gamma_t^\beta(\tau) &= \sum_{i=1}^{n+1} (-1)^i(f_i^- - f_i^+) ( \Gamma_t^\beta(\tau)).
\end{align*}
By \ref{def:cub-conn}(\ref{def:ccc3}), 
$f_t^\alpha \Gamma_t^\beta(\tau) = f_{t+1}^\alpha \Gamma_t^\beta(\tau)$
and  $f_i^\alpha \Gamma_t^\beta = \left\{ \begin{array}{ll}
                                \Gamma_{t-1}^\beta f_i^\alpha & \mbox{if  } i < t;\\
                                \Gamma_{t}^\beta f_{i-1}^\alpha & \mbox{if } i > t+1.
                                \end{array} \right.$ 	

 Now \ref{thm:bd}(\emph{i}), i.e. when $t=1$, and \ref{thm:bd}(\emph{ii}), i.e. when $t=n$, follow immediately.
For $1 < t < n$, the following computation implies
\ref{thm:bd}(\emph{iii}),
\begin{align*}
\partial_{n+1} \Gamma_t^\beta(\tau) &=  \Gamma_{t-1}^\beta \sum_{i=1}^{t-1} (-1)^i (f_i^- - f_i^+) (\tau) +  \Gamma_{t}^\beta   \sum_{i=t+2}^{n+1} (-1)^i (f_{i-1}^- - f_{i-1}^+) (\tau) \\
&= \Gamma_{t-1}^\beta \sum_{i=1}^{t-1} (-1)^i (f_i^- - f_i^+) (\tau) -  \Gamma_{t}^\beta   \sum_{i= t+1}^n (-1)^i (f_i^- - f_i^+) (\tau).
\end{align*}
\end{proof}

Let $K$ be a cubical set with connections and let $\CC(K)$ be its non-degenerate chain complex.
For $n\geq 0$, let $\Con_{n+1}(K)$ be the $R$-submodule of $\CC_{n+1}(K)$ that is generated by the cosets of
$\Gamma_i^\beta(\tau) \mbox{ where } \tau \in K_n, i \in [n], \textrm{ and } \beta \in \{+,-\}$.

The following is an immediate consequence of
\ref{thm:bd}.

\begin{Corollary}
  \label{cor:subcomplex}
   Let $\theta \in  \Con_{n+1}(K)$ then $\partial_{n+1}(\theta) \in \Con_{n}(K)$. In particular,
$\Con(K) = (\Con_\bullet, \partial_\bullet)$ is a chain subcomplex of the chain complex $\CC(K)$. 
\end{Corollary}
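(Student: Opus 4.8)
\textbf{Proof proposal for Corollary~\ref{cor:subcomplex}.}
The plan is to reduce the statement to a direct inspection of the formulas already established in \ref{thm:bd}. First I would observe that it suffices to prove the containment $\partial_{n+1}(\theta) \in \Con_n(K)$ on generators of $\Con_{n+1}(K)$, since $\partial_{n+1}$ is $R$-linear and $\Con_n(K)$ is an $R$-submodule closed under $R$-linear combinations; the ``in particular'' clause then follows because a subcomplex is by definition a graded submodule of $\CC(K)$ stable under the boundary operator, and $\partial_{n-1}\partial_n = 0$ is inherited from $\CC(K)$. So fix a generator, namely the coset of $\Gamma_t^\beta(\tau)$ for some $\tau \in K_n$, some $t \in [n]$, and some $\beta \in \{+,-\}$.

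Next I would split into the three cases $t=1$, $t=n$, and $1 < t < n$ exactly as in \ref{thm:bd}. In each case \ref{thm:bd} expresses $\partial_{n+1}\Gamma_t^\beta(\tau)$ as an $R$-linear combination of terms of the form $\Gamma_{t-1}^\beta(f_i^\pm \tau)$ (with $1 \le i \le t-1$) and $\Gamma_t^\beta(f_i^\pm \tau)$ (with $t+1 \le i \le n$). Each such term is, up to sign, the image under a connection map $\Gamma_j^\beta$ (with $j = t-1$ or $j = t$, and $j \in [n-1]$ since $t \in [n]$) of a singular $(n-1)$-cube $f_i^\pm \tau \in K_{n-1}$. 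Hence each term is, by definition, a generator of $\Con_n(K)$ (or zero, if its class is degenerate — which only helps), and therefore the whole combination lies in $\Con_n(K)$. This is the entire argument; there is essentially no obstacle, since the hard computational work has already been absorbed into \ref{thm:bd}.

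The only point requiring a little care — and the closest thing to a ``main obstacle'' — is the passage from $\LL$-level identities to $\CC$-level statements: \ref{thm:bd} is stated for the chain complex $\CC(K)$, so the formulas already hold modulo $\DD(K)$, and one should note that $\Con_n(K)$ was defined as a submodule of $\CC_{n+1}(K)$ generated by cosets of connection cubes, so membership is automatic once a representative is exhibited as a combination of connection cubes. No compatibility between $\DD$ and $\Con$ beyond what is already encoded in \ref{thm:bd} is needed. I would therefore present the proof in two or three lines: invoke linearity to reduce to generators, cite the three parts of \ref{thm:bd}, and remark that every term on the right-hand side is a connection cube applied to a face of $\tau$, hence lies in $\Con_n(K)$; the subcomplex assertion is then immediate.
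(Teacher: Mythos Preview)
Your proposal is correct and follows exactly the approach the paper takes: the paper states the corollary as an immediate consequence of \ref{thm:bd} with no further argument, and your proof simply unpacks that implication by reducing to generators via linearity and observing that every term on the right-hand side of the formulas in \ref{thm:bd} is a connection applied to a face of $\tau$, hence lies in $\Con_n(K)$.
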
 

We call $\Con(K)$ the \textit{connection chain complex} of $K$.

Clearly, $\Con_{n+1}(K)$ is generated by the cosets of $\Gamma_i^\beta(\tau)$ where $\tau$ is a non-degenerate singular $n$-cubes. In particular, $\Con_1(K) = (0)$.

\begin{Corollary} \label{cor:g1}
Let $\tau \in \CC_n(K)$ be a singular $n$-cube. Then, for $\beta \in\{+,-\}$ and $t\in[n]$,
the following equations are true.
\begin{enumerate}[(i)]
\item \begin{align}\nonumber
\partial_{n+1}  \Gamma_1^\beta(\tau) + \Gamma_1^\beta  \partial_n (\tau)
=   \Gamma_{1}^\beta (f_i^+- f_i^-) (\tau).
\end{align}
\item  For $2 \leq t \leq n$,
\begin{align}\nonumber
\partial_{n+1}  \Gamma_t^\beta(\tau) + \Gamma_t^\beta  \partial_n (\tau)
= \Gamma_{t-1}^\beta  \sum_{i=1}^{t-1} (-1)^i (f_i^- - f_i^+) (\tau) +  \Gamma_{t}^\beta  \sum_{i=1}^{t} (-1)^i (f_i^- - f_i^+) (\tau).
\end{align}
\item For $t \in [n]$,
\begin{align}\nonumber
\partial_{n+1}  \sum_{j=1}^{t} (-1)^j \Gamma_j^\beta(\tau) + \sum_{j=1}^{t} (-1)^{j} \Gamma_{j}^\beta  \partial_n(\tau)= (-1)^{t} \Gamma_t^\beta  \sum_{j=1}^t (-1)^i (f_j^- - f_j^+)(\tau).
\end{align}

\item For $t=n \geq 2$,
\begin{align}\nonumber
\partial_{n+1}  \sum_{j=1}^{n} (-1)^j \Gamma_j^\beta(\tau) + \sum_{j=1}^{n-1} (-1)^{j} \Gamma_{j}^\beta  \partial_n(\tau) = 0.
\end{align}
\end{enumerate}
\end{Corollary}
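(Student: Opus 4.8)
The plan is to derive all four identities from \ref{thm:bd} together with the definition $\partial_n(\tau)=\sum_{i=1}^n(-1)^i(f_i^--f_i^+)(\tau)$, obtaining (i) and (ii) by direct cancellation and then (iii) and (iv) by a telescoping sum.

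First I would prove (i): add $\Gamma_1^\beta\partial_n(\tau)=\Gamma_1^\beta\sum_{i=1}^n(-1)^i(f_i^--f_i^+)(\tau)$ to the expression $\partial_{n+1}\Gamma_1^\beta(\tau)=-\Gamma_1^\beta\sum_{i=2}^n(-1)^i(f_i^--f_i^+)(\tau)$ from \ref{thm:bd}(i); the two sums agree except for the $i=1$ term of the second one, so the result is $-\Gamma_1^\beta(-1)(f_1^--f_1^+)(\tau)=\Gamma_1^\beta(f_1^+-f_1^-)(\tau)$. For (ii), when $2\le t\le n-1$ I would add $\Gamma_t^\beta\partial_n(\tau)$ to the formula of \ref{thm:bd}(iii): the term $\Gamma_{t-1}^\beta\sum_{i=1}^{t-1}$ is already in the target form, while $-\Gamma_t^\beta\sum_{i=t+1}^n$ together with $\Gamma_t^\beta\sum_{i=1}^n$ collapses to $\Gamma_t^\beta\sum_{i=1}^t(-1)^i(f_i^--f_i^+)(\tau)$. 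The boundary case $t=n$ is the same computation applied to \ref{thm:bd}(ii), where the truncated sum $\sum_{i=t+1}^n$ is empty. The only thing to be careful about in these two steps is the signs and the summation ranges.

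For (iii) I would package (i) and (ii) uniformly: writing $A_t:=\Gamma_t^\beta\sum_{i=1}^t(-1)^i(f_i^--f_i^+)(\tau)$ for $t\in[n]$ and setting $A_0:=0$, parts (i) and (ii) say exactly that $\partial_{n+1}\Gamma_t^\beta(\tau)+\Gamma_t^\beta\partial_n(\tau)=A_{t-1}+A_t$ for every $t\in[n]$ (the case $t=1$ reading $A_0+A_1$, which is $A_1$). Multiplying by $(-1)^t$ and summing over $t=1,\dots,T$, the right-hand side telescopes: $\sum_{t=1}^T(-1)^t(A_{t-1}+A_t)=(-1)^TA_T-(-1)^0A_0=(-1)^TA_T$, which is the claimed identity. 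Part (iv) is then the special case $T=n$ of (iii): its right-hand side is $(-1)^nA_n=(-1)^n\Gamma_n^\beta\partial_n(\tau)$, and this is precisely the $j=n$ summand of $\sum_{j=1}^n(-1)^j\Gamma_j^\beta\partial_n(\tau)$ on the left, so removing that summand leaves $0$.

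I do not expect a real obstacle here, since every step is a finite explicit manipulation; the points that require a little attention are the sign conventions, the off-by-one shifts in the summation bounds inherited from \ref{thm:bd}, and the small bookkeeping device of introducing $A_0=0$, which is what lets (i) and (ii) be written as the single identity $\partial_{n+1}\Gamma_t^\beta(\tau)+\Gamma_t^\beta\partial_n(\tau)=A_{t-1}+A_t$ that makes the telescoping in (iii) transparent.
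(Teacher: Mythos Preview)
Your proof is correct and follows essentially the same route as the paper: derive (i) and (ii) by adding $\Gamma_t^\beta\partial_n(\tau)$ to the appropriate case of \ref{thm:bd}, obtain (iii) from (i)--(ii) by an alternating (telescoping) sum, and read off (iv) as the case $t=n$ of (iii). Your introduction of the shorthand $A_t$ with $A_0=0$, which lets (i) and (ii) be written uniformly as $\partial_{n+1}\Gamma_t^\beta(\tau)+\Gamma_t^\beta\partial_n(\tau)=A_{t-1}+A_t$, makes the telescoping in (iii) more transparent than the paper's one-line ``using alternating summation,'' but the argument is the same.
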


\begin{proof}
\ref{cor:g1}(i) follows by adding the term  $\Gamma_t^\beta  \partial_n (\tau)$ to both sides of \ref{thm:bd}(\emph{iii}).  \ref{cor:g1}(i) is a special case of \ref{cor:g1}(ii) without the first sum on the right hand side. Using alternating summation, \ref{cor:g1}(iii) follows from \ref{cor:g1}(ii). Finally, \ref{cor:g1}(iv) is the case $t=n$ of \ref{cor:g1}(iii).
\end{proof}

\begin{Corollary}\label{cor:gx}
Let $\theta = \Gamma_t^\beta(\tau)$ where $\tau \in K_{n-1}$, $t\in [n-1]$ and $\beta \in\{+,-\}$.  The following equations are true.
\begin{enumerate}[(i)]
\item  \begin{align} \nonumber
\partial_{n+1}  \Gamma_t^\beta(\theta) + \Gamma_t^\beta  \partial_n (\theta) = (-1)^{t+1}\beta \theta + 2\Gamma_{t}^\beta \Gamma_{t-1}^\beta \sum_{i=1}^{t-1}(-1)^i(f_i^- - f_i^+)(\tau).
\end{align}
\item \begin{align} \nonumber
\partial_{n+1}  \sum_{j=1}^{t} (-1)^j \Gamma_j^\beta(\theta) + \sum_{j=1}^{t} (-1)^{j} \Gamma_{j}^\beta  \partial_n(\theta)= -\beta\theta + (-1)^{t} \Gamma_t^\beta \Gamma_{t-1}^\beta  \sum_{i=1}^{t-1} (-1)^i (f_i^- - f_i^+)(\tau).
\end{align}
\end{enumerate}
\end{Corollary}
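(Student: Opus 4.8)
The plan is to derive both identities by specializing Corollary \ref{cor:g1} to the chain $\theta = \Gamma_t^\beta(\tau)$, and then evaluating the face terms $f_i^\pm \theta$ that appear on the right-hand side using the commutation relations from \ref{def:cub-conn}(\ref{def:ccc3}). The key observation is that when we feed $\theta = \Gamma_t^\beta(\tau)$ into the formula of \ref{cor:g1}(ii) (applied with dimension $n$, so $t \in [n]$ and $\theta \in K_n$), the right-hand side becomes
\[
\Gamma_{t-1}^\beta \sum_{i=1}^{t-1}(-1)^i(f_i^- - f_i^+)(\theta) + \Gamma_t^\beta \sum_{i=1}^{t}(-1)^i(f_i^- - f_i^+)(\theta),
\]
and every face $f_i^\pm$ with $i \le t$ applied to $\Gamma_t^\beta(\tau)$ can be rewritten: for $i < t$ we get $f_i^\alpha \Gamma_t^\beta = \Gamma_{t-1}^\beta f_i^\alpha$, so those terms turn into $\Gamma_{\ast}^\beta\Gamma_{t-1}^\beta$-type expressions in $\tau$; and for $i = t$ we get the special case $f_t^\alpha \Gamma_t^\beta(\tau) = \tau$ when $\alpha = \beta$ and $f_t^\alpha\Gamma_t^\beta(\tau) = \varepsilon_t f_t^\alpha(\tau)$ when $\alpha \neq \beta$. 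The latter is degenerate, hence zero in $\CC_n(K)$, so only the $\alpha = \beta$ contribution survives and it produces the single term $(-1)^t(\text{sign})\,\tau$, which after applying the outer $\Gamma_t^\beta$ (wait — there is no outer $\Gamma$ on that term; it sits inside $\Gamma_t^\beta\sum(f_i^- - f_i^+)$, so the surviving piece is $\pm\Gamma_t^\beta(\tau) = \pm\theta$).

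Concretely, I would proceed as follows. First, substitute $\theta$ for $\tau$ in \ref{cor:g1}(ii) and split each inner sum $\sum_{i=1}^{t-1}$ and $\sum_{i=1}^{t}$ at $i < t$ versus $i = t$. Second, for the $i < t$ range use $f_i^\alpha \Gamma_t^\beta = \Gamma_{t-1}^\beta f_i^\alpha$ to pull the faces inside; this rewrites $\Gamma_{t-1}^\beta\sum_{i<t}(-1)^i(f_i^- - f_i^+)\theta$ as $\Gamma_{t-1}^\beta\Gamma_{t-1}^\beta\sum_{i<t}(-1)^i(f_i^- - f_i^+)\tau$ and likewise $\Gamma_t^\beta\sum_{i<t}(\cdots)\theta$ as $\Gamma_t^\beta\Gamma_{t-1}^\beta\sum_{i<t}(\cdots)\tau$, and one combines these using \ref{def:cub-conn}(i) (the relation $\Gamma_i^\alpha\Gamma_j^\beta = \Gamma_{j+1}^\beta\Gamma_i^\alpha$ for $i \le j$) if needed to reconcile $\Gamma_{t-1}^\beta\Gamma_{t-1}^\beta$ with $\Gamma_t^\beta\Gamma_{t-1}^\beta$ — this is where the factor $2$ in part (i) comes from, since the two summands both reduce to $\Gamma_t^\beta\Gamma_{t-1}^\beta\sum_{i<t}(-1)^i(f_i^- - f_i^+)\tau$. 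Third, for the $i = t$ term, only $\Gamma_t^\beta$ carries it (the $\Gamma_{t-1}^\beta$ sum stops at $t-1$), and $\Gamma_t^\beta\big((-1)^t(f_t^-\theta - f_t^+\theta)\big)$; using the special case of \ref{def:cub-conn}(\ref{def:ccc3}) and discarding the degenerate summand, $f_t^-\theta - f_t^+\theta$ contributes only the term with matching sign, giving $\pm(-1)^t\,\tau$ with the sign determined by whether $\beta = -$ or $\beta = +$ — tracking this carefully yields the $(-1)^{t+1}\beta\,\theta$ term. Part (ii) then follows from part (i) by the same alternating-summation/telescoping argument used to deduce \ref{cor:g1}(iii) from \ref{cor:g1}(ii): summing $(-1)^j$ times the $j$-th instance collapses the doubled $\Gamma\Gamma$-sums telescopically, leaving one $\Gamma_t^\beta\Gamma_{t-1}^\beta$-term, while the $\pm\beta\theta$ contributions (being independent of the running index in the right way) sum to $-\beta\theta$.

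The main obstacle is bookkeeping the signs and the index shifts correctly — in particular, pinning down exactly which of $f_t^-$ and $f_t^+$ survives for a given $\beta$ (this is the source of the $\beta$ prefactor and is easy to get backwards), and verifying that the two inner sums in \ref{cor:g1}(ii) genuinely coalesce into a single coefficient $2$ in part (i) rather than cancelling or doubling something else. I would handle this by writing out the $i < t$ and $i = t$ contributions as two separate displayed equations, reducing each with the relevant clause of \ref{def:cub-conn}, and only combining at the very end; the telescoping in part (ii) is then purely formal, identical in structure to the proof of \ref{cor:g1}(iii), so no new difficulty arises there.
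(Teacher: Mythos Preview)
Your approach is correct and essentially matches the paper's: part (i) comes from \ref{cor:g1}(ii) applied to $\theta$, using \ref{def:cub-conn}(\ref{def:ccc3}) together with the identity $\Gamma_{t-1}^\beta\Gamma_{t-1}^\beta=\Gamma_t^\beta\Gamma_{t-1}^\beta$ to merge the two sums into the coefficient $2$, exactly as you outline. One small correction on part (ii): it does not follow from part (i) by telescoping (part (i) is a single equation, not a family indexed by $j$); the paper instead applies \ref{cor:g1}(iii) directly to $\theta$ and then simplifies the right-hand side $(-1)^{t}\Gamma_t^\beta\sum_{j=1}^{t}(-1)^j(f_j^--f_j^+)(\theta)$ using the same face relations as in part (i) --- and only the single $j=t$ term produces the $-\beta\theta$, not several contributions summing to it.
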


\begin{proof}
We know from \ref{cor:g1}(ii) that
\begin{align*}
\partial_{n+1}  \Gamma_t^\beta(\theta) + \Gamma_t^\beta  \partial_n (\theta)
&= \Gamma_{t-1}^\beta  \sum_{i=1}^{t-1} (-1)^i (f_i^- - f_i^+) (\theta) +  \Gamma_{t}^\beta  \sum_{i=1}^{t} (-1)^i (f_i^- - f_i^+) (\theta).
\end{align*}
By \ref{def:cub-conn}(\ref{def:ccc3}),
the coset $\Gamma_t^\beta[(-1)^{t} (f_{t}^- - f_{t}^+) (\theta)] = (-1)^{t+1}\beta \theta$ and 
$(f_i^- - f_i^+) (\Gamma_t^\beta(\tau)) =\Gamma_{t-1}^\beta( (f_{i}^- - f_{i}^+)( \tau)$.
Thus
\begin{align*}
\partial_{n+1}  \Gamma_t^\beta(\theta) + \Gamma_t^\beta  \partial_n (\theta)
&= (-1)^{t+1}\beta \theta +
 (\Gamma_{t-1}^\beta+\Gamma_{t}^\beta) \Gamma_{t-1}^\beta  \sum_{i=1}^{t-1} (-1)^i (f_i^- - f_i^+) (\tau) \\
 &= (-1)^{t+1}\beta \theta +
 2\Gamma_{t}^\beta \Gamma_{t-1}^\beta  \sum_{i=1}^{t-1} (-1)^i (f_i^- - f_i^+) (\tau),
\end{align*}
since $\Gamma_{t-1}^\beta\Gamma_{t-1}^\beta = \Gamma_{t}^\beta \Gamma_{t-1}^\beta$. This concludes the proof of \ref{cor:gx}(i).
Now \ref{cor:gx}(ii) follows directly from  \ref{cor:g1}(iii), namely,
 \begin{align*}
\partial_{n+1}  \sum_{j=1}^{t} (-1)^j \Gamma_j^\beta(\theta) + \sum_{j=1}^{t} (-1)^{j} \Gamma_{j}^\beta  \partial_n(\theta)
&= (-1)^{t} \Gamma_t^\beta  \sum_{j=1}^t (-1)^i (f_j^- - f_j^+)(\Gamma_t^\beta(\tau)) \\
&= -\beta\theta + (-1)^{t} \Gamma_t^\beta \Gamma_{t-1}^\beta  \sum_{i=1}^{t-1} (-1)^i (f_i^- - f_i^+)(\tau).
\end{align*}
\end{proof}

\begin{Lemma} \label{lem-main}
Let $\theta = \Gamma_t^\beta(\tau)$, for some $\tau \in K_{n-1}$, $t\in [n-1]$ and $\beta \in\{+,-\}$.  Then
\begin{align*} \label{chain-eqn}
 \partial_{n+1}  [(-1)^{t+1} \Gamma_t^\beta - 2 \sum_{j=1}^{t-1} (-1)^j
 \Gamma_j^\beta](\theta) + [(-1)^{t+1} \Gamma_t^\beta - 2 \sum_{j=1}^{t-1} (-1)^j
 \Gamma_j^\alpha] \partial_n (\theta)
=\beta \theta.
\end{align*}
\end{Lemma}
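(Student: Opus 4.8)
The plan is to derive \ref{lem-main} directly from \ref{cor:gx} by taking a suitable linear combination of its two parts. Specifically, observe that the left-hand side of \ref{lem-main} is exactly
\[
(-1)^{t+1}\bigl(\partial_{n+1}\Gamma_t^\beta(\theta)+\Gamma_t^\beta\partial_n(\theta)\bigr)
\;-\;2\sum_{j=1}^{t-1}(-1)^j\bigl(\partial_{n+1}\Gamma_j^\beta(\theta)+\Gamma_j^\beta\partial_n(\theta)\bigr),
\]
where I have also silently corrected the apparent typo ``$\Gamma_j^\alpha$'' to ``$\Gamma_j^\beta$'' in the statement, since no $\alpha$ is in scope. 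Now I can split the second sum: $\sum_{j=1}^{t-1}(-1)^j(\cdots) = \sum_{j=1}^{t}(-1)^j(\cdots) - (-1)^t(\cdots_{j=t})$. Thus the expression above equals
\[
(-1)^{t+1}\bigl(\partial_{n+1}\Gamma_t^\beta(\theta)+\Gamma_t^\beta\partial_n(\theta)\bigr)
+2(-1)^{t+1}\bigl(\partial_{n+1}\Gamma_t^\beta(\theta)+\Gamma_t^\beta\partial_n(\theta)\bigr)
-2\sum_{j=1}^{t}(-1)^j\bigl(\partial_{n+1}\Gamma_j^\beta(\theta)+\Gamma_j^\beta\partial_n(\theta)\bigr),
\]
which is $3(-1)^{t+1}\bigl(\partial_{n+1}\Gamma_t^\beta(\theta)+\Gamma_t^\beta\partial_n(\theta)\bigr)$ minus twice the telescoped sum from \ref{cor:gx}(ii).

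The key steps, in order: first I substitute \ref{cor:gx}(i) for the $\Gamma_t^\beta$ term and \ref{cor:gx}(ii) for the $\sum_{j=1}^t$ term. The $\Gamma_t^\beta\Gamma_{t-1}^\beta$ correction terms are $3(-1)^{t+1}\cdot 2\Gamma_t^\beta\Gamma_{t-1}^\beta S - 2\cdot(-1)^t\Gamma_t^\beta\Gamma_{t-1}^\beta S$ where $S = \sum_{i=1}^{t-1}(-1)^i(f_i^--f_i^+)(\tau)$; these combine to $\bigl(6(-1)^{t+1}+2(-1)^{t+1}\bigr)\Gamma_t^\beta\Gamma_{t-1}^\beta S$, which does not vanish, so a plain linear combination with these coefficients is \emph{not} quite what works. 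Instead, the correct computation is to take exactly the combination that makes the $\Gamma_t^\beta\Gamma_{t-1}^\beta S$ terms cancel while isolating $\beta\theta$: writing $A := \partial_{n+1}\Gamma_t^\beta(\theta)+\Gamma_t^\beta\partial_n(\theta)$ and $B := \partial_{n+1}\sum_{j=1}^t(-1)^j\Gamma_j^\beta(\theta)+\sum_{j=1}^t(-1)^j\Gamma_j^\beta\partial_n(\theta)$, \ref{cor:gx} gives $A = (-1)^{t+1}\beta\theta + 2\Gamma_t^\beta\Gamma_{t-1}^\beta S$ and $B = -\beta\theta + (-1)^t\Gamma_t^\beta\Gamma_{t-1}^\beta S$. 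Then $(-1)^{t+1}A + 2(-1)^{t+1}\cdot B\cdot(-1)^{t}$... — more cleanly, I eliminate $\Gamma_t^\beta\Gamma_{t-1}^\beta S$: from $A$ it has coefficient $2$, from $B$ coefficient $(-1)^t$, so $(-1)^tA - 2B$ kills it, giving $(-1)^t A - 2B = (-1)^t(-1)^{t+1}\beta\theta - 2(-\beta\theta) = -\beta\theta + 2\beta\theta = \beta\theta$. Finally I check that $(-1)^tA - 2B$ unwinds to the bracketed operator in the statement: $(-1)^t A - 2B = (-1)^t\bigl(\partial_{n+1}\Gamma_t^\beta+\Gamma_t^\beta\partial_n\bigr)(\theta) - 2\sum_{j=1}^t(-1)^j\bigl(\partial_{n+1}\Gamma_j^\beta+\Gamma_j^\beta\partial_n\bigr)(\theta)$; pulling the $j=t$ term out of the sum yields $\bigl((-1)^t - 2(-1)^t\bigr)\bigl(\partial_{n+1}\Gamma_t^\beta+\Gamma_t^\beta\partial_n\bigr)(\theta) - 2\sum_{j=1}^{t-1}(-1)^j(\cdots) = (-1)^{t+1}\bigl(\partial_{n+1}\Gamma_t^\beta+\Gamma_t^\beta\partial_n\bigr)(\theta) - 2\sum_{j=1}^{t-1}(-1)^j\bigl(\partial_{n+1}\Gamma_j^\beta+\Gamma_j^\beta\partial_n\bigr)(\theta)$, which is precisely the left-hand side of \ref{lem-main} (with the $\alpha$ read as $\beta$).

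So the proof is genuinely a two-line algebraic manipulation once \ref{cor:gx} is in hand, and there is no real obstacle; the only thing requiring care is bookkeeping the signs $(-1)^t$ versus $(-1)^{t+1}$ and confirming the $\Gamma_t^\beta\Gamma_{t-1}^\beta$ cross-terms cancel under the combination $(-1)^tA - 2B$. I would present it by stating the abbreviations $A$ and $B$, quoting the two identities from \ref{cor:gx}, forming $(-1)^tA - 2B$, and noting both that it simplifies to $\beta\theta$ and that, after separating the $j=t$ summand, it coincides with the displayed left-hand side.

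\begin{proof}
Write
\[
A := \partial_{n+1}\Gamma_t^\beta(\theta)+\Gamma_t^\beta\partial_n(\theta),
\qquad
B := \partial_{n+1}\sum_{j=1}^{t}(-1)^j\Gamma_j^\beta(\theta)+\sum_{j=1}^{t}(-1)^{j}\Gamma_j^\beta\partial_n(\theta),
\]
and set $S := \sum_{i=1}^{t-1}(-1)^i(f_i^- - f_i^+)(\tau)$. By \ref{cor:gx}(i) and \ref{cor:gx}(ii),
\[
A = (-1)^{t+1}\beta\theta + 2\,\Gamma_t^\beta\Gamma_{t-1}^\beta S,
\qquad
B = -\beta\theta + (-1)^{t}\,\Gamma_t^\beta\Gamma_{t-1}^\beta S .
\]
Hence the cross-terms cancel in the combination $(-1)^tA - 2B$, and we obtain
\[
(-1)^{t}A - 2B = (-1)^{t}(-1)^{t+1}\beta\theta - 2(-\beta\theta) = -\beta\theta + 2\beta\theta = \beta\theta .
\]
On the other hand, separating the $j=t$ summand from the sum defining $B$,
\begin{align*}
(-1)^{t}A - 2B
&= (-1)^{t}\bigl(\partial_{n+1}\Gamma_t^\beta+\Gamma_t^\beta\partial_n\bigr)(\theta)
 - 2\sum_{j=1}^{t}(-1)^j\bigl(\partial_{n+1}\Gamma_j^\beta+\Gamma_j^\beta\partial_n\bigr)(\theta) \\
&= \bigl((-1)^{t} - 2(-1)^{t}\bigr)\bigl(\partial_{n+1}\Gamma_t^\beta+\Gamma_t^\beta\partial_n\bigr)(\theta)
 - 2\sum_{j=1}^{t-1}(-1)^j\bigl(\partial_{n+1}\Gamma_j^\beta+\Gamma_j^\beta\partial_n\bigr)(\theta) \\
&= (-1)^{t+1}\bigl(\partial_{n+1}\Gamma_t^\beta+\Gamma_t^\beta\partial_n\bigr)(\theta)
 - 2\sum_{j=1}^{t-1}(-1)^j\bigl(\partial_{n+1}\Gamma_j^\beta+\Gamma_j^\beta\partial_n\bigr)(\theta) \\
&= \partial_{n+1}\Bigl[(-1)^{t+1}\Gamma_t^\beta - 2\sum_{j=1}^{t-1}(-1)^j\Gamma_j^\beta\Bigr](\theta)
 + \Bigl[(-1)^{t+1}\Gamma_t^\beta - 2\sum_{j=1}^{t-1}(-1)^j\Gamma_j^\beta\Bigr]\partial_n(\theta).
\end{align*}
Combining the two displays gives the claimed identity.
\end{proof}
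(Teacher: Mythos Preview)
Your proof is correct and follows essentially the same approach as the paper: both form the combination $(-1)^t\cdot\text{(i)} - 2\cdot\text{(ii)}$ of the two identities in \ref{cor:gx}, observe that the $\Gamma_t^\beta\Gamma_{t-1}^\beta S$ terms cancel to leave $\beta\theta$, and then rewrite the left-hand side by absorbing the $j=t$ summand to pass from $\sum_{j=1}^{t}$ to $\sum_{j=1}^{t-1}$ and from $(-1)^t$ to $(-1)^{t+1}$. Your version simply spells out the bookkeeping more explicitly, and your reading of the stray ``$\alpha$'' as ``$\beta$'' is the intended one.
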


\begin{proof}
Follows directly from \ref{cor:gx}. By multiplying the equation from  \ref{cor:gx}(i) by $(-1)^{t}$ and subtracting from that twice the equation from \ref{cor:gx}(ii), we get
\begin{align*}
 \partial_{n+1}  [(-1)^{t} \Gamma_t^\beta - 2 \sum_{j=1}^{t} (-1)^j
 \Gamma_j^\beta](\theta) + [(-1)^{t} \Gamma_k^\beta - 2 \sum_{j=1}^{t} (-1)^j
 \Gamma_j^\alpha] \partial_n (\theta)
=\beta \theta.
\end{align*}
Hence
\begin{align*}
 \partial_{n+1}  [(-1)^{t+1} \Gamma_t^\beta - 2 \sum_{j=1}^{t-1} (-1)^j
 \Gamma_j^\beta](\theta) + [(-1)^{t+1} \Gamma_t^\beta - 2 \sum_{j=1}^{t-1} (-1)^j
 \Gamma_j^\alpha] \partial_n (\theta)
=\beta \theta.
\end{align*}
\end{proof}


\begin{Remark}
  Notice that it is possible for a singular $n$-cube $\theta$ which is a connection to be written using different connection maps, say $\theta=\Gamma_t^\alpha (\sigma)= \Gamma_s^\beta(\tau)$ for some $t,s \in [n]$, $\alpha, \beta \in\{+,-\}$, and $\sigma, \tau \in K_{n-1}$.
  
If $s=t$ or $s=t+1$, however, then either $\beta=\alpha$ (and hence $\sigma = \tau$) or $\theta$ is degenerate.
Thus if $\theta$ is a non-degenerate singular $n$-cube that is a connection, then $\theta$ can be
written uniquely as $\theta=\Gamma_t^\alpha (\sigma)$ where $t$ is the smallest such index.
The following lemma follows.
\end{Remark}

\begin{Lemma}
Let $\theta$ be a non-degenerate connection $n$-cube, and suppose that $\theta = \Gamma_t^\alpha(\sigma) = \Gamma_s^\beta(\tau)$ where $t \leq s$. Then either
\begin{enumerate}[(i)]
\item $s=t$ or $s=t+1$, and hence $\sigma = \tau$ and $\alpha = \beta$, or
\item $s  > t+1$,  and in this case $\theta = \Gamma_t^\alpha(\Gamma_{s-1}^\beta(\delta)) = \Gamma_s^\beta(\Gamma_t^\alpha(\delta))$ where $\delta= f_t^\alpha(\tau)= f_{t+1}^\alpha(\tau)=f_s^\beta(\sigma)=f_{s-1}^\beta(\sigma)$.
\end{enumerate}
\end{Lemma}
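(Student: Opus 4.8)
The plan is to compute the ``diagonal'' faces of $\theta$ and read everything off directly from the identities in Definition~\ref{def:cub-conn}. Three facts do all the work: \textbf{(a)} $f_i^\gamma\Gamma_j^\gamma=\id$ whenever $i\in\{j,j+1\}$; \textbf{(b)} $f_i^\gamma\Gamma_j^{\gamma'}=\varepsilon_i f_i^\gamma$ whenever $i\in\{j,j+1\}$ and $\gamma\neq\gamma'$; and \textbf{(c)} the commuting rules $f_i^\gamma\Gamma_j^{\gamma'}=\Gamma_{j-1}^{\gamma'}f_i^\gamma$ for $i<j$ and $f_i^\gamma\Gamma_j^{\gamma'}=\Gamma_j^{\gamma'}f_{i-1}^{\gamma'}$ for $i>j+1$. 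Throughout write $\theta=\Gamma_t^\alpha(\sigma)=\Gamma_s^\beta(\tau)$ with $t\le s$ and $t,s\in[n-1]$; since $\theta\in K_n$ all face maps used below are in range.

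For part (i), that is $s\in\{t,t+1\}$, apply $f_t^\alpha$ (if $s=t$) or $f_{t+1}^\alpha$ (if $s=t+1$) to both expressions for $\theta$. By (a) the left-hand side $\Gamma_t^\alpha(\sigma)$ returns $\sigma$ in either case. For the right-hand side $\Gamma_s^\beta(\tau)$: if $\alpha=\beta$, then (a) returns $\tau$, so $\sigma=\tau$ and we are done; if $\alpha\neq\beta$, then (b) returns $\varepsilon_m f_m^\alpha(\tau)$ for the relevant index $m$, so $\sigma=\varepsilon_m(\rho)$ is degenerate. In that case one rewrites $\theta=\Gamma_t^\alpha(\varepsilon_m(\rho))$ using Definition~\ref{def:cub-conn}(ii) (the case $i<j$, and the case $i=j$ where $\Gamma_i^\alpha\varepsilon_i=\varepsilon_{i+1}\varepsilon_i$) into the form $\varepsilon_k(\,\cdot\,)$, which is degenerate because any cube in the image of a degeneracy is degenerate by the $i=j$ case of Definition~\ref{def:cubicalset}(\ref{def:cc3}); this contradicts the hypothesis on $\theta$. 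Hence $\alpha=\beta$ and $\sigma=\tau$. (The uniqueness of the smallest-index representative asserted in the preceding Remark is precisely the instance $s=t$ of this argument.)

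For part (ii), that is $s>t+1$, I would extract $\delta$ from four faces of $\theta$. From $\theta=\Gamma_t^\alpha(\sigma)$, fact (a) gives $f_t^\alpha(\theta)=f_{t+1}^\alpha(\theta)=\sigma$ and $f_s^\beta(\theta)=f_{s+1}^\beta(\theta)=\tau$. From $\theta=\Gamma_s^\beta(\tau)$ with $t,t+1<s$, fact (c) gives $f_t^\alpha(\theta)=\Gamma_{s-1}^\beta f_t^\alpha(\tau)$ and $f_{t+1}^\alpha(\theta)=\Gamma_{s-1}^\beta f_{t+1}^\alpha(\tau)$; from $\theta=\Gamma_t^\alpha(\sigma)$ with $s,s+1>t+1$, fact (c) gives $f_s^\beta(\theta)=\Gamma_t^\alpha f_{s-1}^\beta(\sigma)$ and $f_{s+1}^\beta(\theta)=\Gamma_t^\alpha f_s^\beta(\sigma)$. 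Comparing the two descriptions of each face yields $\sigma=\Gamma_{s-1}^\beta f_t^\alpha(\tau)=\Gamma_{s-1}^\beta f_{t+1}^\alpha(\tau)$ and $\tau=\Gamma_t^\alpha f_{s-1}^\beta(\sigma)=\Gamma_t^\alpha f_s^\beta(\sigma)$; applying $f_{s-1}^\beta$ to the first chain of equalities and $f_t^\alpha$ to the second, and using (a) again, gives $f_t^\alpha(\tau)=f_{t+1}^\alpha(\tau)=f_{s-1}^\beta(\sigma)=f_s^\beta(\sigma)$. Call this common cube $\delta$. Then $\sigma=\Gamma_{s-1}^\beta(\delta)$ and $\tau=\Gamma_t^\alpha(\delta)$, so $\theta=\Gamma_t^\alpha\Gamma_{s-1}^\beta(\delta)=\Gamma_s^\beta\Gamma_t^\alpha(\delta)$, the last equality by Definition~\ref{def:cub-conn}(i) since $t\le s-1$. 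This is exactly the assertion of part (ii).

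I do not expect a genuine obstacle here: everything reduces to substituting the identities of Definition~\ref{def:cub-conn}. The only point requiring care is keeping track of which branch of the face--connection rules applies at each step, which is precisely where the dichotomy $s\le t+1$ versus $s>t+1$ enters, together with the small auxiliary observation used in part (i) that a cube of the form $\varepsilon_k(\,\cdot\,)$ is degenerate.
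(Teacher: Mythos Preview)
Your argument is correct and is precisely the computation the paper leaves implicit (the lemma is stated without proof, following a Remark that sketches only case~(i)); you have filled in the details by applying the face--connection identities of \ref{def:cub-conn}(\ref{def:ccc3}) exactly as intended. One harmless slip: in your fact~(c) the rule for $i>j+1$ should read $f_i^\gamma\Gamma_j^{\gamma'}=\Gamma_j^{\gamma'}f_{i-1}^{\gamma}$ (the sign on $f$ is unchanged), but your subsequent computations use the correct version.
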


  For the rest of this section, whenever we write a 
  non-degenerate connection $n$-cube $\theta$ as 
  $\theta = \Gamma_t^\beta (\tau)$ 
  we assume $t$ is the smallest index for which such a 
  representation exists. 

  Let $\theta = \Gamma_t^\beta(\tau)$ be a non-degenerate connection. Define
  \[
   \phi_n(\theta) =  \beta \Big[(-1)^{t+1} \Gamma_t^\beta(\theta) - 2 \sum_{j=1}^{t-1} (-1)^j \Gamma_j^\beta(\theta)\Big].
  \]
  The map $\phi_n$ extends linearly to a map $\phi_n :\Con_{n}(K)\longrightarrow \Con_{n+1}(K)$
  such that
  \[
   \phi_n(\sum_{j=1}^s r_{i_j}\Gamma_{i_j}^{\beta_j}(\sigma_j)) = \sum_{j=1}^s r_{i_j}\phi_n(\Gamma_{i_j}^{\beta_j}(\sigma_j)).
  \]

\begin{Theorem}
  \label{thm:homotopy} 
  For any $\theta \in \Con_n(K)$,
  \[
   \partial_{n+1}  \phi_n(\theta) + \phi_{n-1}  \partial_n (\theta) =\theta.
  \]
\end{Theorem}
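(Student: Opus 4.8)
The plan is to prove the chain-homotopy identity $\partial_{n+1}\phi_n(\theta) + \phi_{n-1}\partial_n(\theta) = \theta$ by reducing it, via linearity, to the case where $\theta = \Gamma_t^\beta(\tau)$ is a single non-degenerate connection $n$-cube with $t$ minimal. For such $\theta$, \ref{lem-main} already establishes
\[
\partial_{n+1}\Big[(-1)^{t+1}\Gamma_t^\beta - 2\sum_{j=1}^{t-1}(-1)^j\Gamma_j^\beta\Big](\theta) + \Big[(-1)^{t+1}\Gamma_t^\beta - 2\sum_{j=1}^{t-1}(-1)^j\Gamma_j^\beta\Big]\partial_n(\theta) = \beta\theta.
\]
Multiplying through by $\beta$ (using $\beta^2 = 1$ with the sign convention $+\leftrightarrow +1$, $-\leftrightarrow -1$ implicit in the formula $(-1)^{t+1}\beta\theta$ appearing in \ref{cor:gx}) turns the left-hand operator into exactly $\phi_n$ as defined just before the theorem, and the right-hand side into $\theta$. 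So on generators, the identity reads
\[
\partial_{n+1}\phi_n(\theta) + \psi_n\big(\partial_n(\theta)\big) = \theta,
\]
where $\psi_n$ denotes the operator $\theta \mapsto \beta[(-1)^{t+1}\Gamma_t^\beta(\theta) - 2\sum_{j=1}^{t-1}(-1)^j\Gamma_j^\beta(\theta)]$ applied with the data $(t,\beta)$ of $\theta$. The content of the theorem beyond \ref{lem-main} is therefore entirely about showing that this second term equals $\phi_{n-1}\partial_n(\theta)$ — i.e. that applying $\phi_{n-1}$ to the chain $\partial_n(\theta) \in \Con_{n-1}(K)$ (expanded in its own canonical generators) produces the same element as $\psi_n(\partial_n(\theta))$.

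The key steps, in order, are as follows. First I would make the reduction to generators precise and record that by \ref{cor:subcomplex} we do have $\partial_n(\theta) \in \Con_{n-1}(K)$, so $\phi_{n-1}\partial_n(\theta)$ is defined. Second, I would unwind $\partial_n(\theta) = \partial_n\Gamma_t^\beta(\tau)$ using \ref{thm:bd} — for instance when $1 < t < n$ this is $\Gamma_{t-1}^\beta\sum_{i=1}^{t-1}(-1)^i(f_i^- - f_i^+)(\tau) - \Gamma_t^\beta\sum_{i=t+1}^n(-1)^i(f_i^- - f_i^+)(\tau)$, with the boundary cases $t=1$ and $t=n$ handled by \ref{thm:bd}(i) and (ii). Each summand here is again a connection cube of the form $\Gamma_{t-1}^\beta(f_i^\pm\tau)$ or $\Gamma_t^\beta(f_i^\pm\tau)$, and crucially the \emph{same} superscript $\beta$ appears throughout. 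Third — and this is the delicate bookkeeping point — I would check that the index $t-1$ (resp. $t$) is still the minimal connection index for the cube $\Gamma_{t-1}^\beta(f_i^\pm\tau)$ (resp. $\Gamma_t^\beta(f_i^\pm\tau)$), so that $\phi_{n-1}$ acts on these generators with the expected parameters; here one uses the Remark and the Lemma about uniqueness of minimal representations, together with the fact that we work modulo degenerate cubes so any ambiguous representation is killed. Fourth, with the action of $\phi_{n-1}$ on each summand made explicit, I would expand $\phi_{n-1}\partial_n(\theta)$ and compare term-by-term with $\psi_n(\partial_n(\theta))$, i.e. with $\beta[(-1)^{t+1}\Gamma_t^\beta - 2\sum_{j=1}^{t-1}(-1)^j\Gamma_j^\beta]$ applied to the chain $\partial_n(\theta)$; the relations $\Gamma_i^\beta\Gamma_j^\beta = \Gamma_{j+1}^\beta\Gamma_i^\beta$ for $i \le j$ from \ref{def:cub-conn}(i) (and its degenerate consequence $\Gamma_{j}^\beta\Gamma_j^\beta = \Gamma_{j+1}^\beta\Gamma_j^\beta$) are what let the two telescoping expressions be rewritten into a common normal form.

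The main obstacle I expect is precisely the third/fourth step: verifying that $\phi_{n-1}\partial_n(\theta)$ and $\psi_n(\partial_n(\theta))$ agree as \emph{elements of} $\CC_{n+1}(K)$, not merely formally. The subtlety is that $\phi_{n-1}$ is defined only on canonical generators (minimal-index connection representations), whereas the natural output of \ref{thm:bd} presents $\partial_n(\theta)$ as a combination of connection cubes $\Gamma_{t-1}^\beta(f_i^\pm\tau)$ and $\Gamma_t^\beta(f_i^\pm\tau)$ that need not be in that canonical form — some could even be degenerate, and must then be discarded. So the real work is a normalization argument showing that after passing to $\CC$ (quotienting by $\DD$) and rewriting everything via the connection-composition relations, the index-$t$ ambiguity washes out and both sides collapse to the same chain. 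Once that normal-form computation is in hand, the theorem follows by combining it with \ref{lem-main} as sketched above. A clean alternative, if the bookkeeping becomes unwieldy, is to avoid \ref{lem-main} entirely and instead compute both $\partial_{n+1}\phi_n(\theta)$ and $\phi_{n-1}\partial_n(\theta)$ directly from \ref{thm:bd} and \ref{cor:g1}, expanding each as an explicit $R$-linear combination of double-connection cubes $\Gamma_a^\beta\Gamma_b^\beta(f_i^\pm\tau)$ and checking that their sum telescopes to $\theta$; I would keep this as a fallback.
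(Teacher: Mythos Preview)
Your proposal is correct and follows essentially the same route as the paper: reduce to a generator $\theta=\Gamma_t^\beta(\tau)$, invoke \ref{lem-main}, and then verify that $\phi_{n-1}\partial_n(\theta)$ agrees with $\beta\big[(-1)^{t+1}\Gamma_t^\beta-2\sum_{j=1}^{t-1}(-1)^j\Gamma_j^\beta\big]\partial_n(\theta)$ by expanding $\partial_n(\theta)$ via \ref{thm:bd} and using the relation $\Gamma_{t-1}^\beta\Gamma_{t-1}^\beta=\Gamma_t^\beta\Gamma_{t-1}^\beta$. The one place you are more cautious than the paper is your third step: the paper simply applies $\phi_{n-1}$ to $\Gamma_{t-1}^\beta(f_i^\pm\tau)$ and $\Gamma_t^\beta(f_i^\pm\tau)$ as if $t-1$ and $t$ were automatically the minimal connection indices, without justification, whereas you flag this normalization issue explicitly---so your write-up would in fact be tighter than the published argument on that point.
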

\begin{proof}
 Recall that $\Con_n(K)$ is freely generated by the cosets of 
 $\theta = \Gamma_t^\beta(\tau)$ where $\tau$
 is non-degenerate and $\beta = +,-$.  Using \ref{lem-main}, to conclude the proof 
 we just need to show that
 \[
  \phi_{n-1}  \partial_n (\theta) = \beta \Big[(-1)^{t+1} \Gamma_t^\beta - 2 \sum_{j=1}^{t-1} (-1)^j
  \Gamma_j^\alpha\Big] \partial_n (\theta).
 \]
 Recall that
 $$\partial_{n}(\theta) = \Gamma_{t-1}^\beta   \sum_{i=1}^{t-1} (-1)^i (f_i^- - f_i^+) (\tau) -  \Gamma_{t}^\beta   \sum_{i= k+1}^{n-1} (-1)^i (f_i^- - f_i^+) (\tau).$$
 Now
 \begin{eqnarray*}
  \phi_{n-1}  \partial_n(\theta)
  &=&  \phi_{n-1}   \Gamma_{t-1}^\beta   \sum_{i=1}^{t-1} (-1)^i (f_i^- - f_i^+) (\tau) -\phi_{n-1}  \Gamma_{t}^\beta   \sum_{i= t+1}^{n-1} (-1)^i (f_i^- - f_i^+) (\tau) \\	
  &=&  \beta\Big[(-1)^{t} \Gamma_{t-1}^\beta - 2 \sum_{j=1}^{t-2} (-1)^j \Gamma_j^\beta\Big]  \Gamma_{t-1}^\beta   \sum_{i=1}^{t-1} (-1)^i (f_i^- - f_i^+) (\tau)\\
  & & - \beta\Big[(-1)^{t+1} \Gamma_{t}^\beta - 2 \sum_{j=1}^{t-1} (-1)^j \Gamma_j^\beta\Big] \Gamma_{t}^\beta   \sum_{i= t+1}^{n-1} (-1)^i (f_i^- - f_i^+) (\tau) \\		
  &=&  \beta\Big[(-1)^{t+1} \Gamma_{t-1}^\beta - 2 \sum_{j=1}^{t-1} (-1)^j \Gamma_j^\beta\Big]g]  \Gamma_{t-1}^\beta   \sum_{i=1}^{t-1} (-1)^i (f_i^- - f_i^+) (\tau)\\
  & &- \beta\Big[(-1)^{t+1} \Gamma_{t}^\beta - 2 \sum_{j=1}^{t-1} (-1)^j \Gamma_j^\beta\Big] \Gamma_{t}^\beta   \sum_{i= t+1}^{n-1} (-1)^i (f_i^- - f_i^+) (\tau) \\	
  &=&  \beta\Big[(-1)^{t+1} \Gamma_t^\beta - 2 \sum_{j=1}^{t-1} (-1)^j \Gamma_j^\beta\Big]  \Gamma_{t-1}^\beta   \sum_{i=1}^{t-1} (-1)^i (f_i^- - f_i^+) (\tau)\\
  & &- \beta\Big[(-1)^{t+1} \Gamma_t^\beta - 2 \sum_{j=1}^{t-1} (-1)^j \Gamma_j^\beta\Big] \Gamma_{t}^\beta   \sum_{i= t+1}^{n-1} (-1)^i (f_i^- - f_i^+) (\tau) \\
  &=&  \beta\Big[(-1)^{t+1} \Gamma_t^\beta - 2 \sum_{j=1}^{t-1} (-1)^j
  \Gamma_j^\beta\Big] \partial_n (\theta).
\end{eqnarray*}
\end{proof}

\begin{Corollary}
\label{cor:connectionzero}
 The map $\phi$ is a chain homotopy between the identity and 
 zero chain maps. In particular, we have 
 $\Hom_n(\Con(K))=0$ for all $n$.
\end{Corollary}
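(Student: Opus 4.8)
The plan is to deduce \ref{cor:connectionzero} as an immediate formal consequence of \ref{thm:homotopy}. Recall that a chain homotopy between the identity map $\id$ and the zero map on a chain complex $(\Con_\bullet(K),\partial_\bullet)$ is precisely a collection of $R$-module maps $\phi_n:\Con_n(K)\longrightarrow\Con_{n+1}(K)$ satisfying $\partial_{n+1}\phi_n + \phi_{n-1}\partial_n = \id - 0 = \id$ for all $n$. But this is exactly the identity established in \ref{thm:homotopy} (with the conventions $\Con_{-1}(K)=0$ and $\Con_0(K)=0$, so the boundary cases are vacuous, and recalling $\Con_1(K)=0$ as noted after \ref{cor:subcomplex}). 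Hence $\phi = (\phi_n)_n$ is, by definition, a chain homotopy between $\id$ and $0$.

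First I would spell out why such a chain homotopy forces all homology groups to vanish. Take any $n$ and any cycle $z\in\Con_n(K)$, i.e.\ $\partial_n(z)=0$. Applying \ref{thm:homotopy} to $z$ gives
\[
 z = \partial_{n+1}\phi_n(z) + \phi_{n-1}\partial_n(z) = \partial_{n+1}\phi_n(z),
\]
since the second term vanishes. Thus $z$ is a boundary, lying in $\Im(\partial_{n+1})$. Since $z$ was an arbitrary cycle, $\Ker(\partial_n)\subseteq\Im(\partial_{n+1})$, and combined with $\partial_n\partial_{n+1}=0$ (which holds because $\Con(K)$ is a subcomplex of $\CC(K)$, by \ref{cor:subcomplex}) we get $\Ker(\partial_n)=\Im(\partial_{n+1})$, hence $\Hom_n(\Con(K)) = \Ker(\partial_n)/\Im(\partial_{n+1}) = 0$.

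There is essentially no obstacle here: the entire analytic content has already been discharged in \ref{thm:bd} through \ref{thm:homotopy}, and what remains is the standard homological-algebra fact that a null-homotopic identity map kills homology. The only thing to be careful about is bookkeeping at the low-degree end — making sure the formula of \ref{thm:homotopy} is interpreted correctly when $n=0$ or $n=1$, where the relevant modules $\Con_0(K)$ and $\Con_1(K)$ are zero — but this is automatic and requires no separate argument. So the proof is just the two displayed lines above together with the observation that $\phi$ satisfies the defining equation of a chain homotopy.
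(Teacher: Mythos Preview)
Your proposal is correct and matches the paper's approach: the paper states \ref{cor:connectionzero} without proof, treating it as an immediate consequence of \ref{thm:homotopy}, which is precisely the standard homological-algebra deduction you spell out.
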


\begin{Corollary}
\label{cor:final}
The short exact sequence of chain complexes
\[
0 \longrightarrow \Con_n(K) \hookrightarrow \mathcal{C}_{n+1}(K) \twoheadrightarrow \mathcal{C}_{n+1}(K)/\Con_{n}(K) \longrightarrow 0
\]
 induces a long exact sequence of homology groups, and since $\Hom_n(\Con(K))$
 is trivial, we have
 $\Hom_n(\CC(K)) \cong \Hom_n(\CC(K)/\Con(K))$.
\end{Corollary}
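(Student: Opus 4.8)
The plan is to obtain \ref{cor:final} as a purely formal consequence of the long exact homology sequence together with the vanishing result \ref{cor:connectionzero}; essentially no new computation is required.

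First I would record that, by \ref{cor:subcomplex}, $\Con(K)$ is a chain subcomplex of $\CC(K)$. Consequently the graded quotient module $\CC_\bullet(K)/\Con_\bullet(K)$ carries a well-defined induced boundary operator, the inclusion $\iota\colon \Con(K)\hookrightarrow \CC(K)$ and the canonical projection $\pi\colon \CC(K)\twoheadrightarrow \CC(K)/\Con(K)$ are chain maps, and we obtain a short exact sequence of chain complexes $0\to\Con(K)\to\CC(K)\to\CC(K)/\Con(K)\to 0$, which in each degree $n$ reads $0\to \Con_n(K)\to \CC_n(K)\to \CC_n(K)/\Con_n(K)\to 0$ and is therefore exact degreewise.

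Next I would invoke the zig-zag lemma of homological algebra: every short exact sequence of chain complexes of $R$-modules gives rise to a natural long exact sequence
\[
\cdots \longrightarrow \Hom_n(\Con(K)) \longrightarrow \Hom_n(\CC(K)) \xrightarrow{\,\pi_*\,} \Hom_n(\CC(K)/\Con(K)) \xrightarrow{\,\delta\,} \Hom_{n-1}(\Con(K)) \longrightarrow \cdots,
\]
with $\delta$ the connecting homomorphism; this step can simply be cited from any standard reference on homological algebra.

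Finally, since \ref{cor:connectionzero} gives $\Hom_n(\Con(K))=0$ for all $n$, the portion of the long exact sequence around degree $n$ collapses to $0 \to \Hom_n(\CC(K)) \xrightarrow{\pi_*} \Hom_n(\CC(K)/\Con(K)) \to 0$, so $\pi_*$ is an isomorphism for every $n$, which is exactly the asserted $\Hom_n(\CC(K)) \cong \Hom_n(\CC(K)/\Con(K))$. There is no genuine obstacle in this argument; the only points deserving a moment's attention are confirming (via \ref{cor:subcomplex}) that $\CC(K)/\Con(K)$ really is a chain complex so that $\pi$ is a chain map, and keeping the indices in the short exact sequence straight. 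I would also note that the chain homotopy $\phi$ of \ref{thm:homotopy} lives only on $\Con(K)$ and does not by itself descend to $\CC(K)$, so routing the argument through the long exact sequence is the natural choice rather than trying to contract $\CC(K)$ directly.
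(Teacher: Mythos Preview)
Your argument is correct and is exactly the standard long-exact-sequence argument the paper has in mind; indeed the paper gives no separate proof of \ref{cor:final}, the reasoning being entirely contained in the statement itself together with \ref{cor:subcomplex} and \ref{cor:connectionzero}. As a minor bonus, your writeup silently corrects the indexing slip in the displayed short exact sequence (the paper writes $\Con_n(K)\hookrightarrow\CC_{n+1}(K)$, while of course $\Con_n(K)\subseteq\CC_n(K)$).
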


It is well-known that, over a suitable category, the category of chain complexes and the category of crossed complexes are equivalent \cite{brown2003}.  It would be interesting to see whether the results in this paper can be properly stated and extended to the context of crossed complexes.

\section*{Appendix: Homology of Cubical Sets and Homology of Their Geometric Realization}

Recall that $I^n$ is the geometric $n$-dimensional cube $[0,1]^n$. Let $(f_i^\alpha)^\ast : 
I^{n-1} \rightarrow I^n$ be the map sending $(x_1,\ldots,x_{n-1}) \in I^{n-1}$ to 
$(x_1,\ldots, x_{i-1},y,x_{i},\ldots, x_{n-1})$ where $y = 0$ if $\alpha = -$ and
$y =1$ if $\alpha = +$. Let further $(\varepsilon_i)^\ast  : I^n \rightarrow I^{n-1}$ be the
map sending $(x_1,\ldots, x_n)$ to $(x_1,\ldots, x_{i-1},x_{i+1},\ldots, x_n)$.
The geometric realization $|K|$ of a cubical set is the quotient space of the
disjoint union $\coprod I^n \times K_n$ by the equivalence relation $\sim$, which is generated by the following elementary equivalences: 
For $(x_1,\ldots, x_n) \in I^n$ and $\sigma \in K_{n-1}$ we set 
\begin{eqnarray}
  \label{eq:geo1}
((x_1,\ldots, x_n),\varepsilon_i(\sigma)) & \sim & ((\varepsilon_i)^\ast((x_1,\ldots, x_n)),\sigma)
\end{eqnarray}

and, for $(x_1,\ldots, x_{n-1}) \in I^{n-1}$ and $\sigma \in K_{n}$, we set

\begin{eqnarray} 
  \label{eq:geo2}
  ((x_1,\ldots, x_{n-1}),f_i^\alpha (\sigma)) &\sim& ((f_i^\alpha)^\ast((x_1,\ldots, x_{n-1})),\sigma).
\end{eqnarray}

Then $|K|$ can be given the structure of a CW-complex 
whose (open) $n$-cells are the images $e_\sigma^{(n)}$ of the cells $\mathring{I^n} \times \{\sigma\}$ in $|K|$ for
$\sigma \in K^{nd}_n$. Here $K^{nd}_n$ 
denotes the set of non-degenerate $n$-cubes in $K$, see \cite[Remark 11.1.14]{Brown}.
Let $S(K)$ be the cellular chain complex of $|K|$. 
By the definition of $S(K)$ the cells $e_\sigma^{(n)}$
for $\sigma \in K_n^{nd}$ form a basis of its $n$\textsuperscript{th} chain group $S_n(K)$. 
It is well known (see \cite[Corollary 3.9.11]{Federer}) that identifying $\sigma \in K_n^{nd}$ with $e_\sigma^{(n)}$ 
yields the following isomorphism of chain complexes.

\begin{Lemma}[Corollary 3.9.11 \cite{Federer}]
  $\CC(K) \cong S(K)$. 
\end{Lemma}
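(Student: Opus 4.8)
The plan is to build an explicit chain map $\Psi\colon\CC(K)\to S(K)$ and show it is an isomorphism in each degree. In degree $n$ both modules are free on the set $K^{nd}_n$ of non-degenerate $n$-cubes: $\CC_n(K)$ on the cosets of such cubes, $S_n(K)$ on the cells $e^{(n)}_\sigma$. So I would set $\Psi_n(\sigma)=e^{(n)}_\sigma$ for $\sigma\in K^{nd}_n$ and extend $R$-linearly; this is manifestly an isomorphism of $R$-modules in every degree, and the whole content of the statement is that $\Psi$ commutes with the differentials, i.e. that
\[
\partial^{\mathrm{cell}}\,e^{(n)}_\sigma \;=\; \Psi_{n-1}(\partial_n\sigma)\;=\;\sum_{i=1}^{n}(-1)^i\bigl(e^{(n-1)}_{f_i^-\sigma}-e^{(n-1)}_{f_i^+\sigma}\bigr)
\]
for every $\sigma\in K^{nd}_n$, where a term on the right is read as $0$ whenever the corresponding face is degenerate.

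To compute the left-hand side I would use the standard cellular degree formula: $\partial^{\mathrm{cell}}e^{(n)}_\sigma=\sum_{\tau\in K^{nd}_{n-1}} d_{\sigma,\tau}\,e^{(n-1)}_\tau$, where $d_{\sigma,\tau}$ is the degree of the composite
\[
S^{n-1}=\partial I^n\ \xrightarrow{\ \varphi_\sigma\ }\ |K|^{(n-1)}\ \longrightarrow\ |K|^{(n-1)}/|K|^{(n-2)}\ \longrightarrow\ S^{n-1},
\]
$\varphi_\sigma$ being the attaching map of $e^{(n)}_\sigma$ (the restriction to $\partial I^n$ of the characteristic map of the cell) and the last map collapsing every $(n-1)$-cell other than $e^{(n-1)}_\tau$. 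The first step is to read off $\varphi_\sigma$ facet by facet. Writing $\partial I^n=\bigcup_{i\in[n],\,\alpha\in\{+,-\}}F_i^\alpha$ with $F_i^\alpha=(f_i^\alpha)^\ast(I^{n-1})$, the elementary equivalence \eqref{eq:geo2} says exactly that on $F_i^\alpha$ the map $\varphi_\sigma$ equals the characteristic map of the cube $f_i^\alpha\sigma$ precomposed with $\bigl((f_i^\alpha)^\ast\bigr)^{-1}$. If $f_i^\alpha\sigma$ is degenerate, say $f_i^\alpha\sigma=\varepsilon_j(\rho)$ with $\rho\in K_{n-2}$, then iterating \eqref{eq:geo1} identifies the image of $I^{n-1}\times\{f_i^\alpha\sigma\}$ in $|K|$ with the image of $I^{n-2}\times\{\rho\}$, hence with a subset of the $(n-2)$-skeleton; so that facet is crushed to the basepoint and contributes nothing to $d_{\sigma,\tau}$, which matches the fact that $\varepsilon_j(\rho)$ is $0$ in $\CC_{n-1}(K)$.

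It then remains to evaluate the contribution of each facet $F_i^\alpha$ with $\tau:=f_i^\alpha\sigma$ non-degenerate. On the interior of such a facet the composite above is a homeomorphism onto $S^{n-1}$ minus the basepoint, because the characteristic map of $e^{(n-1)}_\tau$ followed by the collapse is the canonical quotient $I^{n-1}\to I^{n-1}/\partial I^{n-1}$, of degree $+1$; so its local degree is $+1$ or $-1$ according to whether $(f_i^\alpha)^\ast\colon I^{n-1}\to F_i^\alpha$ preserves or reverses orientation, $F_i^\alpha$ carrying the orientation it inherits as a facet of $\partial I^n$. A short computation with the outward-normal convention — using only that $(f_i^\alpha)^\ast$ inserts the constant $i$-th coordinate — gives local degree $(-1)^i$ on $F_i^-$ and $(-1)^{i+1}$ on $F_i^+$. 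Since the degree of a map to $S^{n-1}$ is the sum of the local degrees over the facets hitting the relevant cell, I obtain $d_{\sigma,\tau}=\sum_{i:\,f_i^-\sigma=\tau}(-1)^i+\sum_{i:\,f_i^+\sigma=\tau}(-1)^{i+1}$, which is precisely the coefficient of $e^{(n-1)}_\tau$ in $\Psi_{n-1}(\partial_n\sigma)$. Hence $\Psi$ is a chain map, and since it carries the chosen basis of $\CC_n(K)$ bijectively onto that of $S_n(K)$ in every degree, it is an isomorphism of chain complexes.

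The only genuinely delicate point is the orientation bookkeeping in the last paragraph: one must fix the orientation of $S^{n-1}=\partial I^n$ relative to the cell $I^n$ once and for all, determine how the face inclusions $(f_i^\pm)^\ast$ sit with respect to it, and confirm that the resulting signs are exactly the $(-1)^i$ and $-(-1)^i$ occurring in $\partial_n$. Everything else — well-definedness of $\Psi$, the claim that degenerate faces disappear into the $(n-2)$-skeleton, and the additivity of degree over the facets — is routine CW-topology once the facet-wise description of the attaching map is in hand.
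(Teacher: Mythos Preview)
The paper does not actually prove this lemma: it is stated with attribution to Federer \cite[Corollary 3.9.11]{Federer} and used as a black box, so there is no ``paper's own proof'' to compare against. Your argument is the standard one and is correct: both chain groups are free on $K_n^{nd}$, the obvious bijection is a chain map because the cellular boundary of $e_\sigma^{(n)}$ is computed facet by facet via the degree formula, degenerate faces land in the $(n-2)$-skeleton by \eqref{eq:geo1} and hence vanish, and the orientation of $(f_i^\alpha)^\ast$ relative to the boundary orientation of $\partial I^n$ produces exactly the sign $(-1)^i$ (resp.\ $(-1)^{i+1}$) for $\alpha=-$ (resp.\ $\alpha=+$). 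The only point where one must be careful, as you note, is fixing the orientation conventions consistently; once that is done the rest is routine.
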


If the cubical set $K$ is a cubical set with connections then there is an associated 
geometric realization $|K|'$ which is the quotient of the 
disjoint union $\coprod I^n \times K_n$ by the equivalence relation $\sim'$, which is generated by \eqref{eq:geo1}, \eqref{eq:geo2} and the
relation 
\begin{eqnarray} 
  \label{eq:geo3}
  ((\Gamma_i^\alpha)^\ast ((x_1,\ldots, x_{n})),\sigma) & \sim' & ((x_1,\ldots, x_{n}),\Gamma_i^\alpha(\sigma))
\end{eqnarray}
for $\sigma \in K_{n-1}$ and $(x_1,\ldots, x_n) \in I^n$. Here 
$(\Gamma_i^\alpha)^\ast : I^{n} \rightarrow I^{n-1}$ is defined by
$$(\Gamma_i^\alpha)^\ast ((x_1,\ldots, x_{n})) = 
\left\{ \begin{array}{cc} (x_1,\ldots,x_{i-1},\max(x_i,x_{i+1}),x_{i+2} ,\ldots, x_n) & 
\mbox{~if~} \alpha = - \\
(x_1,\ldots,x_{i-1},\min(x_i,x_{i+1}),x_{i+2} ,\ldots, x_n) & 
\mbox{~if~} \alpha = +
\end{array} \right. .$$
In particular, $\sim'$ is  coarser than $\sim$ and hence $|K|'$ can be seen as
a quotient of $|K|$ by the additional identifications implied by \eqref{eq:geo3}.
Let $K_n^{ndc}$ be the set of $n$-cubes in $K$ that are neither degenerate nor
connections. 

In order to understand the relation between $|K|$ and $|K|'$ we 
need to understand the face structure of cubes in $K_n^{ndc}$.
 For that we consider for any cube $\sigma\in K_n$ the set of all of its faces $\tau$; i.e. all 
 cubes $\tau$ such that
 $\tau = f_{i_1}^{\alpha_1} ( \cdots (f_{i_r}^{\alpha_r}(\sigma))\cdots )$ for a choice of
 $i_1,\ldots, i_r$ and $\alpha_1,\ldots, \alpha_r$. 
 For $\sigma \in K$ we denote by $F_\sigma$ the set of
 its faces. 
 We order the cubes from $K$ by saying that $\tau$ is
 smaller than $\sigma$ if $\tau$ is a face of $\sigma$.
 With this notation we are in position to formulate the
 following structural result on the role of non-degenerate
 and non-connection cubes in the face structure.
 
 \begin{Lemma}
    \label{lem:maxcube}
        For any $\tau \in K_{n}$ there is a unique face $\rho$ of $\tau$ that is maximal
    with the property that it is neither degenerate nor a 
    connection. Moreover, if $\tau = \varepsilon_i(\sigma)$ 
    or $\tau = \Gamma_i^\alpha(\sigma)$ then $\rho$ is a subface of $\sigma$ and $\tau = g_k \cdots g_1 (\rho)$ for suitably chosen connection and degeneracy maps $g_1,\ldots, g_k$ for some $k \geq 0$. 
\end{Lemma}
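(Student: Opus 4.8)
The plan is to argue by induction on the dimension $n$. For $n = 0$ every cube is non-degenerate and non-connection, so $\rho = \tau$ works and is trivially unique. For the inductive step, suppose $\tau \in K_n$. If $\tau$ is itself neither degenerate nor a connection, set $\rho = \tau$; any other non-degenerate, non-connection face of $\tau$ is a proper face, hence of smaller dimension, so $\rho$ is the unique maximal one and the second clause is vacuous. The substantive case is therefore $\tau = \varepsilon_i(\sigma)$ or $\tau = \Gamma_i^\alpha(\sigma)$ for some $\sigma \in K_{n-1}$. Here I would apply the induction hypothesis to $\sigma$, obtaining a unique maximal non-degenerate, non-connection face $\rho$ of $\sigma$, together with a factorization $\sigma = g'_\ell \cdots g'_1(\rho)$ by connection and degeneracy maps. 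Then the key point is that every face of $\tau$ is a face of $\sigma$: this follows because applying $f_j^\gamma$ to $\varepsilon_i(\sigma)$ or to $\Gamma_i^\alpha(\sigma)$ returns, by \ref{def:cubicalset}(\ref{def:cc3}) and \ref{def:cub-conn}(\ref{def:ccc3}), either $\sigma$ itself, or a cube of the form (connection or degeneracy)$\circ f_{j'}^{\gamma}(\sigma)$, or $\varepsilon f(\sigma)$ — in all cases a cube obtained from $\sigma$ by face maps possibly followed by degeneracies/connections, so its non-degenerate non-connection faces are among those of $\sigma$. Iterating, $F_\tau$ and $F_\sigma$ contain the same non-degenerate, non-connection cubes, so $\rho$ is also the unique maximal such face of $\tau$; and composing $\varepsilon_i$ or $\Gamma_i^\alpha$ with the factorization $\sigma = g'_\ell \cdots g'_1(\rho)$ gives the desired $\tau = g_k \cdots g_1(\rho)$.

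To make the "every face of $\tau$ is a face of $\sigma$'' step precise I would isolate it as the combinatorial heart of the argument: show that for $\tau = \varepsilon_i(\sigma)$ one has $F_\tau \subseteq F_\sigma \cup \{\,\varepsilon\text{- or }\Gamma\text{-images of faces of }\sigma\,\}$, and similarly for $\tau = \Gamma_i^\alpha(\sigma)$, by analyzing the three (resp.\ four) cases in the commutation relations for $f_j^\gamma \varepsilon_i$ (resp.\ $f_j^\gamma \Gamma_i^\alpha$). Since $\varepsilon$- and $\Gamma$-images are never non-degenerate/non-connection, the non-degenerate non-connection part of $F_\tau$ sits inside $F_\sigma$. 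One should also check the reverse containment up to the relevant faces — i.e.\ that $\rho$, which is a face of $\sigma$, is still reachable as a face of $\tau$ — which holds because $\rho$ is a face of $\sigma$ and $\sigma$ is a face of $\tau$ after one more face-map application in the degeneracy case (namely $f_i^+\varepsilon_i(\sigma) = \sigma$), and in the connection case $f_i^\alpha \Gamma_i^\alpha(\sigma) = \sigma$ by \ref{def:cub-conn}(\ref{def:ccc3}); hence $\rho \in F_\tau$.

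I expect the main obstacle to be bookkeeping rather than any deep idea: carefully tracking, through the case distinctions in \ref{def:cubicalset}(\ref{def:cc3}) and \ref{def:cub-conn}(\ref{def:ccc3}), that repeatedly taking faces of a degeneracy or a connection only produces cubes that are themselves faces of $\sigma$, possibly pre-composed with further degeneracies and connections, and that the indices in the factorization $\tau = g_k\cdots g_1(\rho)$ can be consistently chosen. A secondary subtlety is uniqueness: two non-degenerate non-connection faces $\rho, \rho'$ of $\tau$ that are both maximal must be compared, and since both lie in $F_\sigma$ the induction hypothesis applied to $\sigma$ forces $\rho = \rho'$ — but one must make sure that maximality within $F_\tau$ is equivalent to maximality within $F_\sigma$ for such cubes, which again follows from $F_\tau$ and $F_\sigma$ sharing their non-degenerate non-connection parts together with the face-order being induced from $K$.
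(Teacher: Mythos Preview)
Your proposal is correct and follows essentially the same route as the paper: induction on $n$, with the inductive step handled by writing each codimension-one face $f_j^\gamma(\tau)$ via the commutation relations \ref{def:cubicalset}(\ref{def:cc3}) and \ref{def:cub-conn}(\ref{def:ccc3}) as either $\sigma$ itself or a degeneracy/connection applied to a proper face of $\sigma$, then invoking the induction hypothesis on these lower-dimensional cubes to see that every non-degenerate, non-connection face of $\tau$ lies beneath the unique such maximal face $\rho$ of $\sigma$. Your explicit verification that $\sigma$ (and hence $\rho$) is indeed a face of $\tau$ via $f_i^\alpha\varepsilon_i = \id$ resp.\ $f_i^\alpha\Gamma_i^\alpha = \id$, and your remark that maximality in $F_\tau$ and $F_\sigma$ coincide because the two sets share their non-degenerate non-connection members, make points that the paper leaves implicit, but the argument is the same.
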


\begin{proof}
  We prove the assertion by induction on 
  the dimension $n$. 
  
  If $n = 0$ then $\tau$ is non-degenerate and
  non-connection. Hence $\tau$ itself is the
  maximal face we are looking for. 
  
  Let $n > 0$. If $\tau$ is neither degenerate nor a connection then again $\tau$ itself is the 
  unique maximal face. 
  
  Let $\tau$ be degenerate, say $\tau =
  \varepsilon_i(\sigma)$ for some $i \in [n]$ and
  some $(n-1)$-cube $\sigma$.
  Then, by (iii) of \ref{def:cubicalset}, $f_j^\alpha(\tau) = \sigma$ if $i = j$, and
  $f_j^\alpha(\tau) = \varepsilon_{i-1}(f_j^\alpha(\sigma))$ if $j < i$ and $= \varepsilon_{i}(f_{j-1}^\alpha(\sigma))$ if $j > i$. By induction, we
  know that there is an unique maximal non-degenerate
  and non-connection face $\rho$ 
  of $\sigma$. We claim that $\rho$ is the unique maximal non-degenerate and non-connection face of $\tau$. By induction we know that each  $\varepsilon_r(f_s^\beta(\sigma))$ has a unique maximal non-degenerate, non-connection face which is a subface of $f_s^\beta(\sigma)$ and 
  hence of
  $\sigma$. In particular, they must be subfaces of $\rho$.
  If follows by induction that $\sigma = g_k\cdots g_1 \rho$ 
  for a sequence of degeneracy and connection maps $g_1,\ldots, g_k$ 
  and $k \geq 0$. Then $\tau = \varepsilon_i g_k
  \cdots g_1 \rho$.
  
  Finally, consider the case that $\tau$ is a connection.
  Say $\tau = \Gamma_{i}^\alpha(\sigma)$ for some $i \in[n]$ and some $(n-1)$-cube $\sigma$.
  Notice that, by (iii) of \ref{def:cub-conn}, every $(n-1)$-face of $\tau$ other than $\sigma$ is either  $\Gamma_j^\beta(f_t^\alpha(\sigma))$ or $\varepsilon_j(f_t^\alpha(\sigma))$ for some $j\in$, $t\in[n]$, and $\alpha \in\{+,-\}$. By induction 
$\sigma$ and any $\Gamma_j^\beta(f_t^\alpha(\sigma))$
have an unique maximal non-degenerate, non-connection face. Again by induction the latter are subfaces of $\sigma$. 
In particular, they must be subfaces of the unique 
maximal non-degenerate, non-connection face $\rho$ of 
$\sigma$. From the induction hypothesis it
follows $\sigma = g_k\cdots g_1 \rho$ 
  for a sequence of degeneracy and connection maps $g_1,\ldots, g_k$ 
  and $k \geq 0$. Then $\tau = \Gamma_i^\alpha g_k
  \cdots g_1 \rho$.
\end{proof}

Note that along the same lines one can show that for any cube there
is a unique maximal non-degenerate face.

The relations among the degeneracy and connection 
maps allow the following strengthening of \ref{lem:maxcube}.

\begin{Lemma}
  \label{lem:maxcubespecial}
  For any $\tau \in K_{n}$ there is a unique face $\rho$ of $\tau$ that is maximal
  with the property that it is neither degenerate nor a 
  connection. Moreover, if $\tau$ is non-degenerate then
  $\tau = g_k \cdots g_1 (\rho)$ for suitably chosen connection maps $g_1,\ldots, g_k$ and some $k \geq 0$. 
\end{Lemma}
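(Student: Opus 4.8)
The plan is to re-run the induction on the dimension $n$ from the proof of \ref{lem:maxcube}, now using the present sharper statement as the induction hypothesis. The one elementary observation underlying everything is that a cube lying in the image of a degeneracy map $\varepsilon_m$ is degenerate: if $\mu = \varepsilon_m(\nu)$ then $\varepsilon_m f_m^+(\mu) = \mu$, since $f_m^+ \varepsilon_m = \id$ by \ref{def:cubicalset}(\ref{def:cc3}). In particular, since $\tau$ is assumed non-degenerate it cannot be written as $\varepsilon_i(\sigma)$, so the induction reduces to two cases: either $\tau$ is itself neither degenerate nor a connection, in which case $\rho = \tau$ and $k = 0$; or $\tau = \Gamma_i^\alpha(\sigma)$ for some $i \in [n]$ and some $(n-1)$-cube $\sigma$. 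The base case $n = 0$ is immediate, because no connection map lands in $K_0$.

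In the second case the key new point is that $\sigma$ must itself be non-degenerate. Indeed, if $\sigma = \varepsilon_j(\sigma')$ then by \ref{def:cub-conn}(ii) the cube $\Gamma_i^\alpha \varepsilon_j(\sigma')$ equals one of $\varepsilon_{j+1}\Gamma_i^\alpha(\sigma')$, $\varepsilon_j\Gamma_{i-1}^\alpha(\sigma')$ or $\varepsilon_{i+1}\varepsilon_i(\sigma')$, each of which lies in the image of a degeneracy map and is therefore degenerate by the observation above, contradicting non-degeneracy of $\tau$.

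Knowing $\sigma$ is non-degenerate, I would apply the induction hypothesis in dimension $n-1$ to get $\sigma = h_\ell \cdots h_1(\rho_\sigma)$ with every $h_s$ a connection map and $\rho_\sigma$ the unique maximal non-degenerate, non-connection face of $\sigma$. It remains to identify $\rho_\sigma$ with the maximal non-degenerate, non-connection face $\rho$ of $\tau$ given by \ref{lem:maxcube}. That lemma says $\rho$ is a face of $\sigma$; being non-degenerate and non-connection, $\rho$ is then a face of $\rho_\sigma$ by maximality inside $\sigma$. Conversely $\rho_\sigma$ is a face of $\sigma$, hence of $\tau$ (note $\sigma = f_i^\alpha \Gamma_i^\alpha(\sigma)$ is a face of $\tau$ by \ref{def:cub-conn}(\ref{def:ccc3})), and it is non-degenerate and non-connection, so $\rho_\sigma$ is a face of $\rho$. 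Hence $\rho = \rho_\sigma$, and $\tau = \Gamma_i^\alpha h_\ell \cdots h_1(\rho)$ is an expression in connection maps only, which closes the induction.

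I do not anticipate a real obstacle here: all the face-combinatorics was already done in \ref{lem:maxcube}, and the only genuinely new step is that connections applied to degenerate cubes are degenerate, which is an immediate consequence of the relations in \ref{def:cub-conn}(ii). The mild care needed is in the bookkeeping of the last paragraph, namely keeping track that no degeneracy map sneaks into $h_\ell \cdots h_1$; but this is exactly what the strengthened induction hypothesis provides.
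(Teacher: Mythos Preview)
Your argument is correct. The paper takes a slightly different route: it first quotes the mixed expression $\tau = g_k \cdots g_1(\rho)$ from \ref{lem:maxcube} (with each $g_i$ a degeneracy or a connection) and then shows that if some $g_i$ is a degeneracy then $\tau$ is degenerate, by downward induction on the largest such $i$. The induction step uses the same relation you invoke, \ref{def:cub-conn}(ii), to rewrite $g_{i+1} g_i = \Gamma_a^\alpha \varepsilon_b$ as $\varepsilon_{b'} g_i'$, bubbling the degeneracy one step outward; once it reaches the outermost position, $\tau$ is visibly degenerate. You instead fold this into the dimension induction by strengthening the hypothesis, observing at the inductive step that the $\sigma$ in $\tau = \Gamma_i^\alpha(\sigma)$ must already be non-degenerate. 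The two arguments have the same content---both rest on the single fact that a connection applied to a degenerate cube is degenerate---but yours avoids the separate bubbling induction at the cost of re-verifying the identification $\rho = \rho_\sigma$, which is already implicit in \ref{lem:maxcube}.
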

\begin{proof}
  From \ref{lem:maxcube} it follows  
  that there is a unique maximal face $\rho$ of $\tau$ that is neither degenerate nor a connection. It also
  follows from that lemma that 
  $\tau = g_k\cdots g_1 \rho$, for degeneracy and
  connection maps $g_1,\ldots, g_k$. If all 
  $g_i$ are connection maps we are done. Assume there is an $i$ 
  such that $g_i$ is a degeneracy map. We claim that then 
  $\tau$ is degenerate. We prove
  the claim by downward induction on the maximal $i$ such that
  $g_i$ is a degeneracy map. 
  If $i = k$ then $\tau$ is degenerate, contradicting the assumptions. If $i <k$ then by \ref{def:cub-conn}(iii)
  there is a connection or degeneracy map $g_i'$ and s
  degeneracy map $g_{i+1}'$ such that
  $$\tau  = g_k\cdots g_{i+2} 
  g_{i+1}' g_i' g_{i-1} \cdots g_1 \rho.$$
  By induction this implies that
  $\tau$ is degenerate.
\end{proof}

Now we apply the results on the face structure in order to 
understand the attachment of cells in $|K|$ and $|K|'$.
We assume without stating the proofs the following fact:

\begin{itemize}
    \item Let $(x,\sigma),(y,\sigma) \in I^{\dim \sigma}\times \{\sigma\}$. Then 
    $(x,\sigma)$, $(y,\sigma)$
    are identified through the equivalence relation generated by \ref{eq:geo1},\ref{eq:geo2} (resp. \ref{eq:geo1}, \ref{eq:geo2} and \ref{eq:geo3}) on 
    $\coprod_{\tau \in K} I^{\dim \tau} \times \{\tau\}$
    if and only if they are identified by the
    equivalence relation generated by \ref{eq:geo1},\ref{eq:geo2} (resp. 
    \ref{eq:geo1}, \ref{eq:geo2} and \ref{eq:geo3}). 
    on $\coprod_{\tau \in F_\sigma} I^{\dim \tau} 
    \times \{ \tau\}$. 
\end{itemize}

This fact allows us to consider the identifications 
by the equivalence relations we consider as local 
identifications among points in the cells corresponding
to the faces of a given cell.

\begin{Lemma}
  \label{lem:retract}
  Let $\tau \in K_n$ be such that $\tau = g_k \cdots g_1 \rho$
  for some cube $\rho$ and connection maps
  $g_1,\ldots, g_k$. Let $\sim_\tau$ 
  be the restriction of the equivalence relation generated by \eqref{eq:geo1},
  \eqref{eq:geo2}, \eqref{eq:geo3} to
  $M_\tau = \coprod_{\sigma \in F_\tau} I^{\dim \sigma} \times \{\sigma\}$ and define $\sim_\rho$ analogously. 
  Then there is a retraction $p_\tau: 
  M_\tau/\sim_\tau \rightarrow M_\rho/\sim_\rho$.
\end{Lemma}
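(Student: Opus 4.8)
My plan is to realize $p_\tau$ as the inverse of the canonical map $M_\rho/{\sim_\rho}\to M_\tau/{\sim_\tau}$. First I would observe that $\rho$ is a face of $\tau$: writing $g_k=\Gamma_{m_k}^{\gamma_k}$ and using $f_{m_k}^{\gamma_k}\Gamma_{m_k}^{\gamma_k}=\id$ from \ref{def:cub-conn}(\ref{def:ccc3}), one gets $f_{m_k}^{\gamma_k}(\tau)=g_{k-1}\cdots g_1(\rho)$, and iterating peels all the way down to $\rho$. Hence $F_\rho\subseteq F_\tau$, so $M_\rho\subseteq M_\tau$, and by the locality fact assumed above the restriction of $\sim_\tau$ to $M_\rho$ is exactly $\sim_\rho$; thus the inclusion descends to a continuous injection $\iota\colon M_\rho/{\sim_\rho}\to M_\tau/{\sim_\tau}$. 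Both quotients are compact (finite unions of closed cubes), so once I know $\iota$ is onto and that $M_\tau/{\sim_\tau}$ is Hausdorff, $\iota$ is a homeomorphism and $p_\tau:=\iota^{-1}$ is the desired retraction, with $p_\tau\circ\iota=\id$. So everything reduces to proving surjectivity of $\iota$ (plus the Hausdorff point, addressed below).

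The combinatorial heart is to show that every cell of $M_\tau$ collapses, inside $M_\tau/{\sim_\tau}$, onto a cell of $M_\rho$. Fix $\sigma\in F_\tau$ and $x\in I^{\dim\sigma}$. By \ref{lem:maxcube}, $\sigma$ has a unique maximal non-degenerate non-connection face $\sigma'$, and $\sigma=h_p\cdots h_1(\sigma')$ for degeneracy and connection maps $h_1,\dots,h_p$. Iterating \ref{lem:maxcube} along the factorization $\tau=g_k\cdots g_1(\rho)$ shows the maximal non-degenerate non-connection face $\rho_0$ of $\tau$ is a face of $\rho$; and since any non-degenerate non-connection face of $\tau$ — in particular $\sigma'$ — must sit below $\rho_0$ by the uniqueness clause of \ref{lem:maxcube}, we get $\sigma'\le\rho_0\le\rho$, i.e. $\sigma'\in F_\rho$. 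Now each intermediate cube $h_\ell\cdots h_1(\sigma')$ is a face of $\sigma$ — strip $h_{\ell+1},\dots,h_p$ using $f_i^\alpha\Gamma_i^\alpha=\id$ and $f_i^\alpha\varepsilon_i=\id$ — hence a face of $\tau$; therefore the elementary identifications \eqref{eq:geo1} and \eqref{eq:geo3} that remove $h_p,\dots,h_1$ from $(x,\sigma)$ are all internal to $F_\tau$ and, by the locality fact, hold in $M_\tau/{\sim_\tau}$. Following them shows $(x,\sigma)$ is $\sim_\tau$-equivalent to a point of $I^{\dim\sigma'}\times\{\sigma'\}\subseteq M_\rho$, which proves $\iota$ is onto.

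The step I expect to be most delicate is knowing $M_\tau/{\sim_\tau}$ is Hausdorff, so that the continuous bijection $\iota$ is a homeomorphism. I would argue this by writing $M_\tau/{\sim_\tau}$ as a quotient of $M_\tau/{\sim_\tau^0}$, where $\sim_\tau^0$ is generated by \eqref{eq:geo1}, \eqref{eq:geo2} alone: that space is the subcomplex of the CW-complex $|K|$ carried by the non-degenerate faces of $\tau$, hence compact Hausdorff, and the extra identifications \eqref{eq:geo3} are generated by finitely many continuous gluings along $\min$ and $\max$ whose equivalence closure is again closed, so the quotient stays Hausdorff. A more hands-on alternative, bypassing the Hausdorff point entirely, is to define $p_\tau$ outright cell by cell via $(x,\sigma)\mapsto\big((h_1)^\ast\cdots(h_p)^\ast(x),\sigma'\big)$ with $\sigma',h_1,\dots,h_p$ as above, and then check directly — using the commutation relations of \ref{def:cubicalset} and \ref{def:cub-conn} — that this is independent of the chosen factorization, descends through \eqref{eq:geo1}--\eqref{eq:geo3}, and restricts to the identity on $M_\rho/{\sim_\rho}$; that bookkeeping is routine but long, which is why I would prefer the quotient-map argument. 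Either way the crux is the same: controlling exactly which elementary identifications are available inside the local piece $M_\tau$, which is precisely what the locality fact together with \ref{lem:maxcube} and \ref{lem:maxcubespecial} supply.
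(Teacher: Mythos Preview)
Your approach differs from the paper's. The paper argues by a short induction on $k$: writing $\tau = \Gamma_i^\beta(\tau')$ with $\tau' = g_{k-1}\cdots g_1(\rho)$, it notes that the equivalence classes on $I^{\dim\tau}\times\{\tau\}$ coming from \eqref{eq:geo3} are the fibers of $(\Gamma_i^\beta)^\ast$ (level sets of $\max$ or $\min$ in coordinates $i,i+1$), and each such class meets the faces $f_i^\beta(\tau)=f_{i+1}^\beta(\tau)=\tau'$ in a single common point of $I^{\dim\tau'}\times\{\tau'\}$; sending each class to that point gives a retraction $M_\tau/{\sim_\tau}\to M_{\tau'}/{\sim_{\tau'}}$, and one composes with the inductively given $p_{\tau'}$. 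No compactness, Hausdorffness, or global bijectivity enters.

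Your primary route---show the inclusion $\iota$ is a continuous bijection and invert it via compact/Hausdorff---would give the stronger conclusion that $\iota$ is a homeomorphism, and your surjectivity argument through \ref{lem:maxcube} is correct and is in fact doing the paper's collapse in compressed form. Two steps, however, are not fully nailed down. Your injectivity claim (that ${\sim_\tau}$ restricted to $M_\rho$ coincides with ${\sim_\rho}$) appeals to the locality fact, but as stated in the paper that fact concerns only pairs of points lying in the \emph{same} cell $I^{\dim\sigma}\times\{\sigma\}$, not arbitrary pairs in $M_\rho$; you would need a stronger version. And the Hausdorff step, which you rightly flag as delicate, requires knowing that the equivalence closure of the finitely many $\min/\max$ gluings is a closed relation---plausible here, but not automatic and not argued. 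Your hands-on alternative, defining $p_\tau$ cell by cell via $(h_1)^\ast\cdots(h_p)^\ast$ and checking factorization-independence, is exactly what the paper's induction packages one $g_j$ at a time: by stripping a single connection per step the independence check never arises, which is what makes the paper's argument short.
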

\begin{proof}
   We construct the retraction by induction on $k$. 
   For $k = 0$ the identity is the desired retraction.
   
   Let $k \ge 1$ and assume that for $\tau' = 
   g_{k-1} \cdots g_1 \rho$ there is such a retraction
   $p_{\tau'} : M_{\tau'}/\sim_{\tau'} 
   \rightarrow M_\rho/\sim_\rho$.
   Then $\tau = g_k \tau'$. The equivalence relation 
   on $I^{\dim \tau} \times \{\tau\}$ induced by the connection map 
   $g_k = \Gamma_i^\beta$ has equivalence classes being
   sets with fixed maximum or
   minimum of the $i$\textsuperscript{th} and 
   $(i+1)$\textsuperscript{st} coordinate
   depending on $\beta$ being $+$ or $-$. Each 
   equivalence class
   has exactly two points that via the face maps
   $f_i^\beta$ and $f_{i+1}^\beta$ are
   identified with points in $I^{\dim \tau'} \times
   \{ \tau'\}$, indeed both points are identified with the
   same point. The map that sends each equivalence class 
   to the image of this point in $M_{\tau'} 
   /\sim_{\tau'}$ provides a retraction
   from $M_\tau/ \sim_\tau$ to $M_{\tau'}/\sim_{\tau'}$.
   Composing this retraction with the retraction
   from $p_{\tau'}$ provides the
   asserted retraction. This concludes the induction
   step.
 \end{proof}

We now introduce the concept of pushing cells for a general CW-complex which we will then match with the process of 
passing from $|K|$ to $|K|'$ in our case.
Let $X$ be a CW-complex where, for $n \geq 0$, 
$X_n = (e_\sigma^{(n)})_{\sigma \in J_n}$ is the 
set of open $n$-cells in $X$ for some indexing set $J_n$. 
For each $\sigma \in J_n$ let $g_\sigma : \partial \overline{e^{(n)}} \rightarrow X^{(n-1)}$ be the attaching map. For some fixed $N \geq 0$, let $\bar{J}_N \subseteq J_N$
be a subset of the index set of the cells in dimension 
$N$ such that, for each $\sigma \in \bar{J}_N$,
\begin{itemize}
    \item there is a 
     $\tau \in J_\ell$ for some $\ell < N$ such that
     $\Im g_\sigma \subseteq \overline{e^{(\ell)}_\tau}$, and 
     \item for this $\tau$ there is a retraction $p_\sigma :
     \overline{e_{\sigma}^{(N)}} \rightarrow 
     \overline{e_\tau^{(\ell)}}$.
\end{itemize}

Now let $X^{\push}$ be the CW-complex with $X_n^{\push} = (\tilde{e}_\sigma^{(n)})_{\sigma\in J_n'}$ the open $n$-cells in $X^{\push}$
 where $J_n' = J_n$ for $n   \neq N$ and 
 $J_N' = J_N \setminus \bar{J}_N$ and 
 attaching maps $g'_\tau(x) = g_\tau(x)$ if $g_\tau(x) \not\in 
 \overline{e_\sigma^{(N)}}$ for some $\sigma \in \bar{J_N}$ and $g'_\sigma (x) = p_\tau (g_\sigma(x))$ otherwise.
 In this situation we say that $X^{\push}$ arises from $X$ by pushing
 the cells $e_\sigma^{(N)}$ for $\sigma \in \bar{J}_N$.
 
 Next we show that $|K|$ and $|K|'$
 are examples of CW-complexes that arise from each other by
 pushing cells.
 
 \begin{Lemma}
  \label{lem:connect}
  The geometric realization $|K|'$ is a
  CW-complex that arises
  from the CW-complex of the geometric realization $|K|$ by pushing the cells 
  corresponding to connections
  successively by dimension in increasing order.
  In particular, $|K|'$ can be given the structure of a CW-complex with $n$-cells indexed
  by the $K_n^{ndc}$.
\end{Lemma}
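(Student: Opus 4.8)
The plan is to run the pushing construction of the previous paragraphs one dimension at a time, using Lemma~\ref{lem:maxcubespecial} to name the target cell of each push and Lemma~\ref{lem:retract} to supply the retraction. Set $X^{(0)}=|K|$, whose $m$-cells are indexed by $K_m^{nd}$, and write $K_m^{nd}=K_m^{ndc}\sqcup C_m$ with $C_m$ the set of non-degenerate connection $m$-cubes (note $C_0=C_1=\emptyset$). By induction on $N\geq 0$ I would produce a CW-complex $X^{(N+1)}$, obtained from $X^{(N)}$ by pushing the cells indexed by $C_N$, whose $m$-cells are indexed by $K_m^{ndc}$ for $m\leq N$ and by $K_m^{nd}$ for $m>N$, and whose underlying space is the quotient of $|K|$ by the identifications generated by \eqref{eq:geo1}, \eqref{eq:geo2} and those instances of \eqref{eq:geo3} whose connection cube has dimension $\leq N$; the case $N=0$ is vacuous. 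Since pushing in dimension $N$ alters only cells of dimension $>N$, the skeleta stabilise, and the $X^{(N)}$ have a common colimit $X^{(\infty)}$ whose $m$-cells are indexed by $K_m^{ndc}$ for all $m$; by the space description it is the quotient of $|K|$ by all of \eqref{eq:geo1}--\eqref{eq:geo3}, that is, $X^{(\infty)}=|K|'$, which is the assertion (the ``in particular'' being the cell count in the colimit).

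For the inductive step, fix $\sigma\in C_N$ and let $\rho=\rho(\sigma)$ be its unique maximal non-degenerate, non-connection face. By Lemma~\ref{lem:maxcubespecial}, $\sigma=g_k\cdots g_1\rho$ with the $g_i$ connection maps; since $\sigma$ is a connection and $\rho$ is not, $\rho$ is a proper face, so $\ell:=\dim\rho<N$. I would check the two requirements of the pushing construction for the cell $e^{(N)}_\sigma$ of $X^{(N)}$ with target $e^{(\ell)}_\rho$. For the attaching-map condition: by Lemma~\ref{lem:maxcube} every non-degenerate non-connection face of $\sigma$ is a subface of $\rho$, while by the inductive hypothesis every connection cell of dimension $<N$ has already been pushed onto a non-connection cell, which (being a face of $\sigma$) is again a subface of $\rho$; hence the image of the attaching map of $e^{(N)}_\sigma$ in $X^{(N)}$ lies in $\overline{e^{(\ell)}_\rho}$. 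For the retraction: by the local identification fact recalled above together with the inductive description of which relations have so far been imposed, the closed cell $\overline{e^{(N)}_\sigma}$ of $X^{(N)}$ is $M_\sigma/\sim_\sigma$ and $\overline{e^{(\ell)}_\rho}$ is $M_\rho/\sim_\rho$ in the sense of Lemma~\ref{lem:retract}, which then supplies $p_\sigma$. Pushing $e^{(N)}_\sigma$ for all $\sigma\in C_N$ collapses each $\overline{e^{(N)}_\sigma}$ onto $\overline{e^{(\ell)}_{\rho(\sigma)}}$ along $p_\sigma$ and deletes the open cell, giving $X^{(N+1)}$ with the asserted cell structure.

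The step I expect to be the main obstacle is the space description in the inductive step: that pushing the cells of $C_N$ imposes \emph{exactly} the part of \eqref{eq:geo3} in dimension $\leq N$ and nothing more. Since $p_\sigma$ is built in Lemma~\ref{lem:retract} as a composite of elementary retractions, one per connection map $g_i$, two points need care. First, for $\sigma=\Gamma_i^\alpha(\sigma')$ the outermost elementary retraction must send the class of $(x,\sigma)$ to the image of $((\Gamma_i^\alpha)^\ast(x),\sigma')$ --- i.e.\ it is precisely the relation \eqref{eq:geo3} for $\sigma$ --- which is a short computation with the L-shaped fibres of $(\Gamma_i^\alpha)^\ast$ and the identities $f_i^\alpha\Gamma_i^\alpha=f_{i+1}^\alpha\Gamma_i^\alpha=\id$ of \ref{def:cub-conn}(\ref{def:ccc3}). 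Second, the instances of \eqref{eq:geo3} attached to the \emph{degenerate} cubes of the form $\Gamma_i^\alpha(\sigma')$ of dimension $\leq N$ must add nothing new; this follows by using the relations $\Gamma_i^\alpha\varepsilon_j=\cdots$ of \ref{def:cub-conn} to rewrite such cubes as degeneracies and appealing to \eqref{eq:geo1}, while the uniqueness of the smallest-index representation of a non-degenerate connection cube recorded above prevents double-counting. The remaining ingredients --- that the underlying space of $X^{\push}$ is $X$ with each pushed closed cell collapsed along its retraction, and the bookkeeping over the faces of $\sigma$ via \ref{def:cubicalset}(\ref{def:cc3}) and \ref{def:cub-conn}(\ref{def:ccc3}) --- are routine.
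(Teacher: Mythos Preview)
Your proposal is correct and follows essentially the same inductive strategy as the paper: push the non-degenerate connection cells dimension by dimension, using \ref{lem:maxcube}/\ref{lem:maxcubespecial} to identify the target cell $\rho$ and \ref{lem:retract} to supply the retraction, while simultaneously tracking that the intermediate space is the quotient of $|K|$ by the portion of \eqref{eq:geo3} already imposed. The paper's proof is terser---in particular, it asserts in a single sentence that the retraction of \ref{lem:retract} identifies exactly the classes of $\sim_n$, whereas you spell out (correctly) why the elementary retraction matches \eqref{eq:geo3} via the identities $f_i^\alpha\Gamma_i^\alpha=f_{i+1}^\alpha\Gamma_i^\alpha=\id$, and why the degenerate connection cubes contribute no new identifications---so your write-up is in fact more complete on the point you flagged as the main obstacle.
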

\begin{proof}
   Since the first connection 
   cells (that are not already degenerate) arise in dimension $2$, we can assume the following situation. For
   some $n \geq 2$ we have
   constructed a complex $X$ such that
   \begin{itemize}
       \item[(a)] $X$ arises from $|K|$ by pushing all 
         cells that correspond to connections of dimensions
         $< n$ where $n \geq 2$.
       \item[(b)] $|K|/\sim_{n} \cong X$ where 
         $\sim_{n}$ is the equivalence relation which 
         has singleton equivalence classes outside the closure of the cells of dimension $<n$ and equals \eqref{eq:geo3} when applied to the union of the closures of all other cells.
   \end{itemize}
   Now let $\sigma \in K_n$ be a connection that is 
   non-degenerate. Then by \ref{lem:maxcube} there is
   a unique maximal face $\tau \in K_{\ell}$ of $\sigma$
   which is non-degenerate and non-connection. Since 
   all proper connection faces of $\sigma$ have been
   pushed the attaching map $g_\sigma$ of the $N$-cell
   $I^N$ corresponding to $\sigma$ has as its image
   the $\ell$-cell corresponding to $\tau$. Furthermore,
   by \ref{lem:maxcubespecial} the conditions of 
   \ref{lem:retract} are satisfied and 
   there is a retraction $p_\sigma$ from then 
   closure of the $N$-cell corresponding to $\sigma$ to 
   the closure of the $\ell$-cell corresponding to 
   $\sigma$. Moreover, by \ref{lem:retract} the map
   $\sigma$ identifies the exactly those elements which
   lie in the same equivalence class of $\sim_n$.
   
   Hence the conditions for a pushing to the cells
   corresponding to non-degenerate connections
   $\sigma$ are satisfied. 
   It follows that (a) and (b) are satisfied for 
   $\sim_n$.
\end{proof}

Finally, we need to understand the impact of pushing cells
on the cellular chain complex of a CW-complex.

 \begin{Lemma}  
   \label{lem:general}
   Let $X$ be a CW-complex with cells 
   $X_n = (e_\sigma^{(n)})_{\sigma \in J_n}$, $n \geq 0$. 
   Assume that there is a dimension $N$ such that
   $X^{\push}$ arises from $X$ by pushing the cells
   $e_\sigma^{(N)}$ for $\sigma \in \bar{J}_N \subseteq
   J_N$. 
   Let 
   $$\partial e_\sigma^{(n)} = \sum_{\sigma' \in J_{n-1}} 
   d_{\sigma,\sigma'} \, e_{\sigma'}^{(n-1)}$$ be the differential of
   the cellular chain complex associated to $X$. Then 
   for $\sigma \in J_n \setminus J_N$, $\sigma' \in J_{n-1}
   \setminus J_N$
   the coefficient $d^{\push}_{\sigma,\sigma'}$ in the 
   differential of the cellular chain complex
   of $X^{\push}$ we have $d^{\push}_{\sigma,\sigma'} = d_{\sigma,\sigma'}$. 
\end{Lemma}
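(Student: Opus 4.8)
The plan is to build a cellular map $r\colon X\to X^{\push}$ — the ``pushing map'' — and then read the statement off from the naturality of the cellular differential under cellular maps. First note that the claim is only nonvacuous when $n\notin\{N,N+1\}$: the hypothesis $\sigma\in J_n\setminus J_N$ forces $n\ne N$, and $\sigma'\in J_{n-1}\setminus J_N$ forces $n-1\ne N$. If $n\le N-1$ there is nothing to prove, because pushing alters only cells of dimension $\ge N$, so the skeleta $X^{(n)}\supseteq X^{(n-1)}\supseteq X^{(n-2)}$ and all of their cells coincide with those of $X^{\push}$, while $d_{\sigma,\sigma'}$ is computed from exactly these data. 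Thus it remains to treat $n\ge N+2$.

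I would construct $r$ one skeleton at a time. Below dimension $N$ set $r=\id$. In dimension $N$ let $r_N\colon X^{(N)}\to X^{\push,(N)}$ be the identity on $X^{(N-1)}$ and on the open $N$-cells not in $\bar J_N$, and equal to the retraction $p_\sigma\colon\overline{e_\sigma^{(N)}}\to\overline{e_\tau^{(\ell)}}$ on each pushed cell $e_\sigma^{(N)}$ with $\sigma\in\bar J_N$. Here one checks that the pieces glue — two distinct pushed $N$-cells meet only along faces of dimension $<N$, and on such a face the retraction of \ref{lem:retract} is the identity — so $r_N$ is well defined, continuous, and lands in $X^{\push,(N)}$. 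Inductively, given $r_m$ with $r_m(X^{(m)})\subseteq X^{\push,(m)}$, extend over each $(m{+}1)$-cell $\tau$: the definition of the attaching maps of $X^{\push}$ gives $g^{\push}_\tau=r_m\circ g_\tau$, since the recipe ``$g^{\push}_\tau(x)=g_\tau(x)$ unless $g_\tau(x)$ lies in a pushed $N$-cell, in which case replace it by $p_\sigma(g_\tau(x))$'' is exactly the recipe for $r_m$. Hence the characteristic map of $\tau$ in $X^{\push}$ agrees with $r_m\circ g_\tau$ on $\partial\overline{e_\tau^{(m+1)}}$, and we may let $r_{m+1}$ carry the open cell $\tau$ of $X$ homeomorphically onto the open cell $\tau$ of $X^{\push}$, compatibly with $r_m$ on the boundary. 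These maps assemble to a cellular map $r\colon X\to X^{\push}$.

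By construction $r$ restricts to a homeomorphism from each open cell of dimension $\ne N$ onto the corresponding open cell of $X^{\push}$, and it sends each open cell $e_\sigma^{(N)}$ with $\sigma\in\bar J_N$ into $X^{\push,(N-1)}$. Orienting the cells of $X^{\push}$ compatibly with those of $X$ through these homeomorphisms, the induced chain map $r_\ast\colon S_\bullet(X)\to S_\bullet(X^{\push})$ satisfies $r_\ast e_\sigma^{(m)}=e_\sigma^{(m)}$ for $m\ne N$, $r_\ast e_\sigma^{(N)}=e_\sigma^{(N)}$ for $\sigma\in J_N\setminus\bar J_N$, and $r_\ast e_\sigma^{(N)}=0$ for $\sigma\in\bar J_N$ (its image lies in $X^{\push,(N-1)}$, so the relative fundamental class dies). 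Now fix $\sigma\in J_n\setminus J_N$ with $n\ge N+2$; then $r_\ast e_\sigma^{(n)}=e_\sigma^{(n)}$, and since every $(n{-}1)$-cell has dimension $n-1\ge N+1\ne N$ we also have $r_\ast e_\rho^{(n-1)}=e_\rho^{(n-1)}$ for all $\rho\in J_{n-1}$. Applying the chain-map identity $\partial^{\push} r_\ast=r_\ast\partial$ to $e_\sigma^{(n)}$ and comparing the coefficient of $e_{\sigma'}^{(n-1)}$ on the two sides yields $d^{\push}_{\sigma,\sigma'}=d_{\sigma,\sigma'}$, as claimed.

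The main obstacle is the construction of $r$: verifying that the retractions $p_\sigma$ glue over overlapping closed pushed cells (which is where one needs the retractions of \ref{lem:retract} to restrict to the identity on subfaces), checking continuity in the weak topology, and carrying out the orientation bookkeeping that upgrades ``equal up to sign'' to genuine equality of coefficients. If one prefers to avoid the glueing altogether, one can instead push the cells of $\bar J_N$ one at a time, in any order, and compose the resulting pushing maps; this changes no coefficient between cells of dimension $\ne N$ and so proves the same statement.
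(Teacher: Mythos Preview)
Your argument is correct, but it takes a different route from the paper's. The paper proves the lemma by a direct, local comparison of the degree computations that define $d_{\sigma,\sigma'}$ and $d^{\push}_{\sigma,\sigma'}$: both coefficients are degrees of compositions $S^{n-1}\xrightarrow{g_\sigma}X^{(n-1)}\to X^{(n-1)}/(X^{(n-1)}\setminus e_{\sigma'}^{(n-1)})\cong S^{n-1}$, and the two attaching maps $g_\sigma$, $g'_\sigma$ differ only at points landing in the closure of a pushed $N$-cell; since both the original point and its retracted image lie in the complement of $e_{\sigma'}^{(n-1)}$, they are sent to the basepoint in the quotient, and the two compositions coincide. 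Your approach instead packages the retractions into a global cellular map $r\colon X\to X^{\push}$ and reads off the equality of coefficients from the chain-map identity $\partial^{\push}r_\ast=r_\ast\partial$. This is more structural and arguably makes the definition of $X^{\push}$ in dimensions above $N{+}1$ more transparent, at the cost of having to verify that the pieces of $r_N$ glue and of doing the orientation bookkeeping. Two small remarks: first, the gluing you worry about follows already from the general pushing hypotheses, with no appeal to \ref{lem:retract} needed, since $\Im g_\sigma\subseteq\overline{e_\tau^{(\ell)}}$ and $p_\sigma|_{\overline{e_\tau^{(\ell)}}}=\id$ force $p_\sigma$ to be the identity on $\overline{e_\sigma^{(N)}}\setminus e_\sigma^{(N)}$, hence on any overlap with another closed $N$-cell; second, your inductive identification $g^{\push}_\tau=r_m\circ g_\tau$ is really the right way to make sense of the paper's informal description of the attaching maps of $X^{\push}$ above dimension $N{+}1$.
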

\begin{proof}
  The coefficient $d_{\sigma,\sigma'}$ is given as
  the degree of the composition
  $$S^{n-1} \cong \partial \overline{e^{(n)}} \xrightarrow{g_\sigma} X^{(n-1)} 
  \rightarrow X^{(n-1)} / (X^{(n-1)} \setminus e_{\sigma'}^{(n-1)} )
  \cong S^{n-1}.$$
  The composition depends on the attaching maps
  $g_\sigma$ of the cells corresponding to $\sigma$ 
  only.
  Now consider the same sequence in $X^{\push}$,
  which in particular implies $\sigma,\sigma'
  \neq \tau$. Let $g'_\sigma$ be the corresponding 
  attaching maps.
  If $g_\sigma(x) \not\in \overline{e_\tau^{(N)}}$ for
  some $\tau \in \bar{J}_N$ then $g_\sigma(x) = g'_\sigma(x)$. If 
  $g_\sigma(x) \in \overline{e_\tau^{(N)}}$ for some
  $\tau \in \bar{J}_N$ then $g'_\sigma(x) = p_\sigma (g_\sigma(x))$ for a retraction $p_\sigma$. 
  But in the latter case  
  $g_\sigma(x)$ and $g'_\sigma(x)$ lie in the
  complement of any $(n-1)$ cell different from 
  $e_\tau^{(N)}$.  In that
  situation the composition is again 
  determined by $g_\sigma$.
  It follows that $d_{\sigma,\sigma'} = 
  d^{\push}_{\sigma,\sigma'}$.
\end{proof}

By definition $\CC_n(K)/\Con_n(K)$ has a basis indexed by $K_n^{ndc}$. The differential of the complex $\CC_n(K)/\Con_n(K)$ 
are arises from the differential in $\CC(K)$ in the
following way. Let $\partial \alpha$ is the differential of $\alpha \in K^{ndc}_n$ in $\CC_n(K)$ then we set
all coefficients of element from $K_{n-1}^{nd} \setminus 
K_{n-1}^{ndc}$ to $0$.
Now the following theorem is an immediate consequence of \ref{lem:general} and \ref{lem:connect}.

\begin{Theorem}
  \label{thm:final}
  The cellular chain complex $S'(K)$ of $|K|'$ is isomorphic to the 
  quotient complex $\CC(K)/\Con(K)$. In particular,
  $$H_i(|K|') \cong H_i(S'(K)) \cong H_i( \CC(K)/\Con(K)).$$
\end{Theorem}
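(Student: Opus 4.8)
The plan is to derive the theorem by combining \ref{lem:connect} with \ref{lem:general}, taking the Federer isomorphism $\CC(K)\cong S(K)$ as the starting point. First I would use \ref{lem:connect} to present $|K|'$ as the CW-complex obtained from $|K|$ by a sequence of pushes, one for each dimension $n\ge 2$, the push in dimension $n$ removing exactly the cells indexed by the non-degenerate connection $n$-cubes. This gives $|K|'$ a CW-structure whose $n$-cells are indexed by $K_n^{ndc}$, so the cellular chain group $S'_n(K)$ is free on $K_n^{ndc}$. On the algebraic side, $\CC_n(K)$ is free on $K_n^{nd}$ while $\Con_n(K)$ is spanned by those basis elements that happen to be connections, so $\CC_n(K)/\Con_n(K)$ is free on $K_n^{ndc}$ as well. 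Thus the obvious bijection of bases identifies the underlying graded $R$-modules of $S'(K)$ and $\CC(K)/\Con(K)$, and it remains only to check that it is a chain map.

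For the differentials I would show that the incidence number in $S'(K)$ between cells indexed by $\sigma\in K_n^{ndc}$ and $\sigma'\in K_{n-1}^{ndc}$ equals the coefficient $d_{\sigma,\sigma'}$ of $\sigma'$ in $\partial_n\sigma$ computed in $\CC(K)$, where the Federer isomorphism identifies $d_{\sigma,\sigma'}$ with the corresponding incidence number in $S(K)$. This is exactly what \ref{lem:general} delivers, applied once at each stage of the pushing sequence supplied by \ref{lem:connect}: when the dimension-$m$ connection cells are pushed, \ref{lem:general} (with $N=m$) says that the incidence number between any two cells neither of which is among those pushed dimension-$m$ connection cells is left unchanged. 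Since cells indexed by $K^{ndc}$-cubes are never connections, they are never pushed, so their mutual incidence numbers survive every stage. Hence the differential of $S'(K)$ sends $\sigma\in K_n^{ndc}$ to $\sum_{\sigma'\in K_{n-1}^{ndc}} d_{\sigma,\sigma'}\,\sigma'$, which is precisely $\partial_n\sigma$ as computed in $\CC(K)$ with the coefficients on the cubes of $K_{n-1}^{nd}\setminus K_{n-1}^{ndc}$ discarded, i.e. the differential of $\CC(K)/\Con(K)$ as described just before the theorem. This proves $S'(K)\cong \CC(K)/\Con(K)$ as chain complexes; the homology statement then follows because a CW-complex has the homology of its cellular chain complex, giving $H_i(|K|')\cong H_i(S'(K))\cong H_i(\CC(K)/\Con(K))$.

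The main obstacle is organizational and concerns the second step: \ref{lem:general} is a statement about a single push in a single dimension $N$, whereas \ref{lem:connect} realizes $|K|'$ only through a countable, dimension-indexed sequence of pushes. One must therefore run an induction along this sequence, checking that (a) a push in dimension $m$ leaves the cells and attaching maps in dimensions $\le m$ (apart from the removed $m$-cells) untouched, so that the cellular chain complex stabilizes in each fixed degree after finitely many stages, and (b) at each stage the two cells in question, being indexed by $K^{ndc}$-cubes, lie outside the set of pushed cells, so \ref{lem:general} applies and the incidence number does not change. Once this bookkeeping is carried out, nothing further is needed.
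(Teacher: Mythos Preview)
Your proposal is correct and follows exactly the approach of the paper, which simply states that the assertion follows immediately from \ref{lem:connect} and \ref{lem:general}. Your expanded discussion of the inductive bookkeeping needed to iterate \ref{lem:general} through the dimension-by-dimension sequence of pushes is a useful elaboration of what the paper leaves implicit, but the underlying argument is the same.
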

\begin{proof}
  The assertion follows immediately from \ref{lem:connect} and \ref{lem:general}.
\end{proof}

The theorem together with \ref{cor:final} implies the 
following.

\begin{Corollary}
\label{cor:superfinal}
Let $K$ be a cubical set with connections. Then 
$$H_i(|K|') \cong H_i(S'(K)) \cong H_i( \CC(K)/\Con(K))
\cong H_i(\CC(K)) \cong H_i(|K|).$$
\end{Corollary}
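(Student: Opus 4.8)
The plan is to obtain the displayed string of isomorphisms by concatenating results already in hand; no new computation is needed, only careful bookkeeping.

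First I would record the two ``classical'' links. On one end, $S'(K)$ is by construction (\ref{lem:connect}) the cellular chain complex of the CW-complex $|K|'$, so $H_i(S'(K))\cong H_i(|K|')$ is the standard fact that cellular homology computes the singular homology of a CW-complex. On the other end, $\CC(K)\cong S(K)$ as chain complexes by the Federer isomorphism \cite[Corollary 3.9.11]{Federer}, and $S(K)$ is the cellular chain complex of the CW-complex $|K|$; hence $H_i(\CC(K))\cong H_i(S(K))\cong H_i(|K|)$.

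Next I would insert the two links supplied by this paper. The middle isomorphism $H_i(S'(K))\cong H_i(\CC(K)/\Con(K))$ is exactly \ref{thm:final}, which identifies $S'(K)$ with the quotient complex $\CC(K)/\Con(K)$ via \ref{lem:general} and \ref{lem:connect}. The remaining isomorphism $H_i(\CC(K)/\Con(K))\cong H_i(\CC(K))$ is \ref{cor:final}: the short exact sequence $0\to\Con(K)\to\CC(K)\to\CC(K)/\Con(K)\to 0$ induces a long exact sequence in homology, and since $\Hom_n(\Con(K))=0$ for all $n$ by \ref{cor:connectionzero} --- which rests on the explicit chain homotopy $\phi$ of \ref{thm:homotopy} --- the quotient map is a quasi-isomorphism.

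Concatenating these four isomorphisms yields the corollary. The only point that needs attention --- and the closest thing to an obstacle --- is matching the indexing conventions across the cited statements: \ref{cor:final} is phrased with $\Con_n(K)\subseteq\CC_{n+1}(K)$, whereas the appendix works with $\CC(K)/\Con(K)$ as an honestly graded quotient complex, so I would check that the two uses of $\CC(K)/\Con(K)$ refer to the same object with the same differential (the one described just before \ref{thm:final}). Beyond that, the substance was already discharged in \ref{thm:bd}, \ref{thm:homotopy}, and the appendix lemmas, and this corollary is simply their assembly.
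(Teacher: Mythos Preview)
Your proposal is correct and matches the paper's approach exactly: the paper offers no proof beyond the sentence ``The theorem together with \ref{cor:final} implies the following,'' and you have simply unpacked that sentence into its four constituent isomorphisms, citing the same ingredients (\ref{thm:final}, \ref{cor:final}, the Federer isomorphism, and cellular homology of CW-complexes). Your parenthetical remark about the grading shift in \ref{cor:final} versus the appendix is a fair caution but not a substantive obstacle.
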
 

This fact provides another motivation for the study of connections.

\section{Acknowledgment}
The authors thank Professor Ronald Brown for his valuable comments and suggestions on an earlier version of this paper.

\bibliographystyle{siam}
\bibliography{DiscHomo}
\end{document}